\newcommand{\ignore}[1]{}
\renewcommand{\Re}{\operatorname{Re}}
\renewcommand{\Im}{\operatorname{Im}}
\newcommand{\C}{{\mathbb{C}}}
\newcommand{\R}{{\mathbb{R}}}
\newcommand{\sA}{{\mathcal{A}}}
\newcommand{\sF}{{\mathcal{F}}}
\newcommand{\sS}{{\mathcal{S}}}
\newcommand{\sI}{{\mathcal{I}}}
\newcommand{\sV}{{\mathcal{V}}}
\newcommand{\sX}{{\mathcal{X}}}
\newcommand{\sY}{{\mathcal{Y}}}
\newtheorem{thm}{Theorem}[section]
\newtheorem{prop}[thm]{Proposition}
\newtheorem{lemma}[thm]{Lemma}
\theoremstyle{definition}
\newtheorem{defn}[thm]{Definition}
\newtheorem{example}[thm]{Example}
\theoremstyle{remark}
\newtheorem{remark}[thm]{Remark}
\author{Ji\v{r}\'{\i} Lebl}
\thanks{The author was in part supported by Simons Foundation collaboration grant 710294.}
\address{Department of Mathematics, Oklahoma State University,
Stillwater, OK 74078, USA}
\email{lebl@okstate.edu}
\date{August 13, 2021}
\title{Segre-Degenerate Points Form a Semianalytic Set}
\keywords{Segre-degenerate, Segre variety, semianalytic}
\subjclass[2020]{32C07 (Primary), 32B20, 14P15 (Secondary)}
\begin{document}


\begin{abstract}
We prove that the set of Segre-degenerate points of a real-analytic subvariety $X$ in $\C^n$ is a closed semianalytic set.  It is a subvariety if $X$ is coherent.  More precisely, the set of points where the germ of the Segre variety is of dimension $k$ or greater is a closed semianalytic set in general, and for a coherent $X$, it is a real-analytic subvariety of $X$.  For a hypersurface $X$ in $\C^n$, the set of Segre-degenerate points, $X_{[n]}$, is a semianalytic set of dimension at most $2n-4$.  If $X$ is coherent, then $X_{[n]}$ is a complex subvariety of (complex) dimension $n-2$.  Example hypersurfaces are given showing that $X_{[n]}$ need not be a subvariety and that it also need not be complex; $X_{[n]}$ can, for instance, be a real line.
\end{abstract}

\maketitle



\section{Introduction} \label{section:intro}

Segre varieties are a widely used tool for dealing with real-analytic
submanifolds in complex manifolds.
Recently, there have been many applications of Segre variety techniques
to singular real-analytic subvarieties,
and while the techniques are powerful, they have to be applied carefully.
It is tempting to cite an argument or result for submanifolds to prove
the same result for subvarieties, but there are two things that can go wrong.
First, the Segre variety can be degenerate (of wrong dimension),
and second, the variety itself may be not coherent, and the Segre variety cannot
be defined by the same function(s) at all points.
One cannot define Segre varieties with respect to
the complexification at one point and expect this complexification to give a well-defined Segre
variety at all nearby points (germs have complexifications, but
their representatives may not).
One incorrect but very tempting statement
is that the set of Segre-degenerate points of a real hypersurface in $\C^n$
is necessarily a complex-analytic subvariety.
The result follows for coherent 
hypersurfaces, but not in general.
The set of Segre-degenerate
points of a hypersurface is not only not a
complex-analytic subvariety in general,
it need not even be a real-analytic subvariety, it is
merely a semianalytic set.
We give an example where it is not a subvariety, and one where it
is of odd real dimension.

The idea of using Segre varieties is old, although the techniques for using
them in CR geometry were brought into prominence first by
Webster~\cite{Webster:map} and Diederich--Forn\ae ss~\cite{DF:realbnd}.
For a good introduction to their use for submanifolds, see the book by
Baouendi--Ebenfelt--Rothschild~\cite{BER:book}.
They started to be used for singular subvarieties recently,
see for example
Burns--Gong~\cite{burnsgong:flat},
Diederich--Mazzilli~\cite{DM},
the author~\cite{Lebl:lfsing},
Adamus--Randriambololona--Shafikov~\cite{ARS},
Fern\'{a}ndez-P\'{e}rez~\cite{FP14},
Pinchuk--Shafikov--Sukhov~\cite{PSS},
and many others.
However, the reader should be aware that sometimes in the literature
on singular subvarieties
a Segre variety is defined with respect to a single defining function
and it is not made clear that the Segre variety is then not well-defined if the point
moves.

A good reference for real-analytic geometry is
Guaraldo--Macr\`\i--Tancredi~\cite{TopicsReal}, and a good reference for
complex analytic subvarieties is Whitney~\cite{Whitney:book}.

A real-analytic subvariety of an open $U \subset \C^n$ is a relatively closed subset
$X \subset U$ defined locally by the vanishing of real-analytic functions.
If $p \in X$, then the ideal $I_p(X)$ of real-analytic
germs at $p$ vanishing on $X$ is generated by the components of a mapping $f(z,\bar{z})$.
Let $\Sigma_p X$, the germ of the \emph{Segre variety} at $p$,
be the germ at $p$ of a complex-analytic subvariety given by the vanishing
of $z \mapsto f(z,\bar{p})$ ($\Sigma_p X$ is independent of the generator
$f$).  Normally $\Sigma_p X$ is of the same complex
codimension as is the real codimension of $X$.  So if $X$ is a real
hypersurface,
then $\Sigma_p X$ is usually a germ of a complex hypersurface.
For a hypersurface, we say
$X$ is \emph{Segre-degenerate} at $p$ if $\Sigma_p X$
is not a complex hypersurface, that is, if
$\Sigma_p X = (\C^n,p)$.  See \S~\ref{sec:segre} for a more precise
definition.

One of the main differences of real and complex varieties is that real
varieties need not be coherent.
A real-analytic subvariety is \emph{coherent} if the sheaf of germs of
real-analytic functions vanishing
on $X$ is a coherent sheaf.  Equivalently, $X$ is coherent if it has a
complexification, that is, a single variety that defines
the complexification of all germs of $X$,
or in yet other words, if for every $p$,
representatives of the generators of $I_p(X)$
generate the ideals $I_q(X)$ for all nearby $q$.
For the hypersurface case, we prove the following result.

\begin{thm} \label{thm:mainhyper}
Let $U \subset \C^n$ be open and $X \subset U$ a
real-analytic subvariety of codimension 1 (a hypersurface).
Let $X_{[n]} \subset X$ be the set of Segre-degenerate points.
Then:
\begin{enumerate}[(i)]
\item
$X_{[n]}$ is a semianalytic set of dimension at most $2n-4$, which is locally
contained in a complex-analytic subvariety of (complex) dimension at most $n-2$.
\item
If $X$ is coherent, then 
$X_{[n]}$ is a complex-analytic subvariety of (complex) dimension at most $n-2$.
\end{enumerate}
\end{thm}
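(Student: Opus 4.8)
The plan is to reduce both parts of the theorem to the study of a single defining function and its complexification near a fixed point, and then to combine this with the general statement (proved elsewhere in the paper) that $X_{[k]}$ is always a closed semianalytic set, and a real-analytic subvariety when $X$ is coherent. So I fix $p_0\in X$. Since $X$ is a hypersurface, $I_{p_0}(X)$ is principal; I choose a real-analytic $r$ on a connected neighborhood $\Omega$ of $p_0$ with $X\cap\Omega=\{r=0\}$ and such that the germ of $r$ at $p_0$ generates $I_{p_0}(X)$. Writing $r=r(z,\bar z)$ and letting $R(z,w)$ be its complexification --- holomorphic on a connected neighborhood $\sU$ of the diagonal $\{(z,\bar z):z\in\Omega\}$ --- reality of $r$ is equivalent to the identity $\overline{R(\bar w,\bar z)}=R(z,w)$, so I may also take $\sU$ invariant under $\tau(z,w)=(\bar w,\bar z)$. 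Put $A=\{w:R(\cdot,w)\equiv 0\}$, a complex-analytic subvariety (cut out by the vanishing of all $z$-Taylor coefficients of $R$), and $\bar A=\{\bar w:w\in A\}$, a complex-analytic subvariety of $\Omega$.

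The first step is to show $X_{[n]}\cap\Omega\subset\bar A$, with equality when $X$ is coherent. Indeed, for every $q\in\Omega$ the germ of $r$ at $q$ lies in $I_q(X)$, so the germ of $\Sigma_q X$ is contained in the germ of $\{z:R(z,\bar q)=0\}$; hence $q\in X_{[n]}$ forces $R(z,\bar q)\equiv 0$ near $q$, and by connectedness and analyticity $\bar q\in A$. When $X$ is coherent the germ of $r$ at every nearby $q$ already generates $I_q(X)$, so $\Sigma_q X$ is exactly the germ of $\{z:R(z,\bar q)=0\}$ and the inclusion becomes an equality. Granting for the moment that $\dim_\C\bar A\le n-2$, part~(i) follows: $X_{[n]}$ is semianalytic by the general result and is locally contained in the complex-analytic subvariety $\bar A$ of dimension $\le n-2$, so $\dim_\R X_{[n]}\le 2n-4$. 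Part~(ii) follows too: for coherent $X$, $X_{[n]}$ is locally equal to $\bar A$, hence locally complex-analytic; being also a real-analytic subvariety (general result) and locally complex-analytic, it is a complex-analytic subvariety, of dimension $\le n-2$.

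The heart of the matter is the bound $\dim A\le n-2$, and this is where I expect the real work to lie. Since $R\not\equiv 0$, there is a $z_0$ with $R(z_0,\cdot)\not\equiv 0$, and then $A\subset\{w:R(z_0,w)=0\}$, a complex hypersurface, so $\dim A\le n-1$. Suppose for contradiction that the germ of $A$ at $\bar p_0$ has an $(n-1)$-dimensional irreducible component $W=\{g=0\}$, with $g$ an irreducible holomorphic germ vanishing at $\bar p_0$. Since $R(\cdot,w)\equiv 0$ for $w\in W$, the function $R$ vanishes on $\{(z,w)\in\sU:w\in W\}$; applying $\tau$ together with the reality identity shows $R$ also vanishes on $\{(z,w)\in\sU:z\in V\}$, where $V:=\{z:g(\bar z)=0\}$ is a complex hypersurface of real codimension~$2$. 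Hence both $g(w)$ and the holomorphic germ $\overline{g(\bar z)}$ (whose zero set is $V$) divide $R$; they involve disjoint variables and are non-units, so they are coprime in the factorial local ring, and $g(w)\,\overline{g(\bar z)}$ divides $R$. Restricting to the diagonal gives $r=|g(\bar z)|^2\,q$ with $q$ real-analytic, and $q$ is real-valued because $R\big/\bigl(g(w)\overline{g(\bar z)}\bigr)$ again satisfies the reality identity. Now $|g(\bar z)|^2\ge 0$ vanishes exactly on $V$, and $X\cap\Omega=V\cup\{q=0\}$. If $V\not\subset\{q=0\}$, then near a generic point of $V$ the set $X$ coincides with $V$ and so has a germ of real codimension~$2$, contradicting that $X$ is a hypersurface. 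If instead $V\subset\{q=0\}$, then $q$ vanishes on $X\cap\Omega$, so the germ of $q$ at $p_0$ lies in $I_{p_0}(X)=(r)=\bigl(|g(\bar z)|^2 q\bigr)$; since the ring of real-analytic germs at $p_0$ is a domain and $q\not\equiv 0$, cancelling $q$ makes $|g(\bar z)|^2$ a unit, which is absurd as $g(\bar p_0)=0$. Either way we reach a contradiction, so $\dim A\le n-2$.

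The main obstacle, as indicated, is this dimension step: one must use the $\tau$-invariance carefully to extract the holomorphic factor $\overline{g(\bar z)}$ alongside the antiholomorphic factor $g(\bar z)$, check the coprimality in the local ring, and convert the algebraic conclusion $|g(\bar z)|^2\mid r$ into the geometric contradiction with the codimension hypothesis. One should also take some care with the conventions that make the reduction legitimate --- that $I_{p_0}(X)$ is principal for a hypersurface, and that $r$ may be chosen to generate it --- since these are exactly the places where the "codimension~$1$" hypothesis enters, as well as with the precise sense in which the local complex-analytic pieces $\bar A$ (in the coherent case) assemble into the global subvariety $X_{[n]}$.
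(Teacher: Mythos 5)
Your overall architecture is sound and your reduction of both parts to (a) the general semianalyticity/subvariety results and (b) a local dimension bound for the ``degenerate set relative to a fixed defining function'' is exactly how the paper organizes things: your $\bar A$ is the paper's $X_{U[n]}$, your inclusion $X_{[n]}\cap\Omega\subset\bar A$ with equality in the coherent case is the paper's Proposition~\ref{prop:basicprops}(iii) and Lemma~4.1, and the quoted general results are Theorems~\ref{thm:cohXk} and~\ref{thm:noncohXk}. Where you genuinely diverge is in the proof of $\dim A\le n-2$ (the paper's Proposition~\ref{prop:XUnsubvariety}). The paper argues on the variety level: if $X_{U[n]}$ had a component $V$ of dimension $d-n=n-1$, the $\sigma$-symmetry would force $V\times U^{\text{conj}}$ and $U\times V^{\text{conj}}$ to be distinct $d$-dimensional components of $\sX^U$, contradicting irreducibility of the complexification. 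You argue on the function level: $g(w)$ and $\overline{g(\bar z)}$ are coprime divisors of $R$, so $\lvert g(\bar z)\rvert^2$ divides $r$, and then the factorization of $r$ is incompatible with $r$ generating $I_{p_0}(X)$. These are two dressings of the same symmetry idea; yours is more elementary (UFD arithmetic in the local ring) but buys the conclusion only for a principal ideal, whereas the paper's version works verbatim in every codimension.

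Two soft spots you should shore up. First, both your opening claim that $I_{p_0}(X)$ is principal and your Case~1 conclusion (``a germ of real codimension $2$ contradicts that $X$ is a hypersurface'') silently assume that the germ $(X,p_0)$ is of pure dimension $2n-1$; the ideal of a non-pure germ such as $\{x=0\}\cup\{y=z=0\}$ is not principal, and a codimension-$1$ subvariety in the paper's sense may carry lower-dimensional pieces. You need an explicit reduction to irreducible (hence, for the top-dimensional components, height-one prime, hence principal) germs, handling the components of dimension $<2n-1$ by the general Theorem~\ref{thm:main2}; this is the same reduction the paper makes. Second, once $(X,p_0)$ is irreducible, your Case~1 can be eliminated outright: $(X,p_0)=(V,p_0)\cup(\{q=0\},p_0)$ and $\dim V=2n-2<2n-1$ force $(X,p_0)=(\{q=0\},p_0)$ by irreducibility, so you land in Case~2 automatically. (Alternatively, without irreducibility, note that $(\Re\tilde g)\cdot q$ vanishes on $V\cup\{q=0\}=X$ but does not lie in $(\lvert g\rvert^2 q)$, again contradicting that $r$ generates $I_{p_0}(X)$.) With these repairs the argument is correct.
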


The dimension of the complex subvariety may be smaller than $n-2$.
Example~\ref{example:isol} gives a coherent hypersurface in
$\C^3$ where $X_{[n]}$ is an isolated point.
For noncoherent $X$, examples exist
for which $X_{[n]}$ is
not a complex variety, or that are not even a real-analytic subvariety.
In particular, the dimension of $X_{[n]}$ need not be even.
Example~\ref{example:Sbadrealline} is a hypersurface in $\C^3$ such that (real) dimension of
$X_{[n]}$ is 1.  In Example~\ref{example:nonvar}, $X_{[n]}$ is only
semianalytic and not a real-analytic subvariety.

The Segre variety can be defined with respect to a
specific defining function, or a neighborhood $U$ of a point $p$.
For a small enough $U$, take the representatives of the generators of
$I_p(X)$, and use those to define $\Sigma_q^U X$ for all $q \in X$.
The germ of $\Sigma_p^U X$ is then $\Sigma_p X$.
However, for a noncoherent $X$, the
germ of $\Sigma_q^U X$ at $q$ need not be the same as
the germ $\Sigma_q X$,
no matter how small $U$ is and how close $q$ is to $p$,
since the representatives of the generators of $I_p(X)$ may not generate
$I_q(X)$.
There may even be regular points $q$ arbitrarily close
to $p$ where $\Sigma_q^U X$ is singular (reducible) at $q$.
See Example~\ref{example:regularSbad}.
If $q$ is a regular point where $X$ is generic (e.g.\ a hypersurface), the
germ $\Sigma_q X$ is always regular.
The point is that the germs $\Sigma_q X$ cannot be defined coherently by
a single set of equations for a noncoherent subvariety.

The results above are a special case of results for higher codimension.
In general,
the set of ``Segre-degenerate points'' would be points where the Segre
variety is not of the expected dimension.
The main result of this paper is that for
general $X$, we can stratify $X$ into semianalytic sets
by the dimension of the Segre variety.

\begin{thm} \label{thm:main2}
Let $U \subset \C^n$ be open and $X \subset U$ a
real-analytic subvariety of dimension $d < 2n$ (i.e. $X \not= U$).
Let $X_{[k]} \subset X$ be the set of points where the Segre variety
is of dimension $k$ or higher.  Then:
\begin{enumerate}[(i)]
\item For every $k = 0,1,\ldots,n$, $X_{[k]}$ is a closed semianalytic subset of $X$,
and $X_{[n]}$ is locally (as germs at every point) contained in a complex-analytic subvariety
of dimension at most $n-d-1$.
\item If $X$ is coherent, then for every $k=0,1,\ldots,n$, $X_{[k]}$ is a closed real-analytic
subvariety of $X$, and $X_{[n]}$ is a complex-analytic subvariety of
dimension at most $n-d-1$.
\end{enumerate}
\end{thm}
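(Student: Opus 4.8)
The plan is to argue locally near an arbitrary point $p\in X$ and then patch, since all four conclusions are local. Fix generators $f=(f_1,\dots,f_m)$ of $I_p(X)$ on a small $U$, let $F(z,\zeta)$ be their holomorphic complexification near $(p,\bar p)\in\C^n\times\C^n$, and set $\sM=\{(z,\zeta):F(z,\zeta)=0\}$, a complex-analytic subvariety with $\sM\cap\{\zeta=\bar z\}=X$ near $p$, invariant under $\sigma(z,\zeta)=(\bar\zeta,\bar z)$ with fixed-point set $X$. For $q\in U$ let $\Sigma_q^U X$ denote the germ at $q$ of $\{z:F(z,\bar q)=0\}$; then $\Sigma_q X\subseteq\Sigma_q^U X$ always (the germs of the $f_j$ at $q$ lie in $I_q(X)$), with equality whenever $F$ generates $I_q(X)$, which by the description of coherence in the introduction holds for every $q$ on a small enough $U$ when $X$ is coherent. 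I use throughout Remmert's upper semicontinuity of fiber dimension: for a holomorphic map, $\{x:\dim_x(\text{fiber through }x)\ge k\}$ is a complex-analytic subvariety. From this, each $X_{[k]}$ is closed even without coherence: if $q_j\to q$ in $X$ with $\dim_{q_j}\Sigma_{q_j}X\ge k$, choose generators $g$ of $I_q(X)$ near $q$; their germs at nearby $q_j$ lie in $I_{q_j}(X)$, so $\Sigma_{q_j}X$ is contained in the germ $T_{q_j}$ at $q_j$ of $\{z:g(z,\bar q_j)=0\}$, while $T_q=\Sigma_q X$, and Remmert applied to $(z,\zeta)\mapsto\zeta$ on $\{g=0\}$ gives $k\le\limsup_j\dim_{q_j}T_{q_j}\le\dim_q T_q=\dim_q\Sigma_q X$.

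Assume now $X$ is coherent. On a small $U$, $\Sigma_q X=\Sigma_q^U X$ for all $q$, so $q\in X_{[k]}$ iff $(q,\bar q)\in A_k:=\{x\in\sM:\dim_x\pi^{-1}(\pi(x))\ge k\}$ with $\pi(z,\zeta)=\zeta$; by Remmert $A_k$ is a complex-analytic subvariety of $\sM$, locally $\{G_i=0\}$ for holomorphic $G_i$, so $X_{[k]}\cap U$ — identified via $q\mapsto(q,\bar q)$ with $A_k\cap\{\zeta=\bar z\}$ — is the real-analytic subvariety $\{q:G_i(q,\bar q)=0\ \forall i\}$, proving part (ii) for every $k$. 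For $X_{[n]}$: $q\in X_{[n]}$ iff $F(z,\bar q)\equiv0$ in $z$, i.e.\ (writing $F(z,\zeta)=\sum_\alpha c_\alpha(\zeta)z^\alpha$) iff $c_\alpha(\bar q)=0$ for all $\alpha$, so $X_{[n]}\cap U=\{q:\bar q\in W\}$ with $W=\{c_\alpha=0\ \forall\alpha\}$, and $\{q:\bar q\in W\}$ is complex-analytic (cut out by $q\mapsto\overline{c_\alpha(\bar q)}$). Since $W$ depends only on the $f_j$ and $q\in X_{[n]}$ always forces $F(z,\bar q)\equiv0$, even without coherence the same $W$ furnishes, locally, a complex-analytic subvariety containing $X_{[n]}$ — the complex-subvariety assertion in part (i).

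For the dimension bound on $X_{[n]}$, take $X$ coherent, write $Y=X_{[n]}\cap U$, $e=\dim Y$ (note $e<n$, else $Y=\C^n$ and $X=U$), and fix a smooth point $q_0$ of $Y$ with $\dim_{q_0}Y=e$. As $F(z,\bar q)\equiv0$ for all $q\in Y$, the germ of $\sM$ at $(q_0,\bar q_0)$ contains the cylinder $\sZ:=\C^n_z\times\overline Y$ (where $\overline Y=\{\bar q:q\in Y\}$), an irreducible germ of complex dimension $n+e$ whose real points (intersection with $\{\zeta=\bar z\}$) are $\cong Y$, of real dimension $2e$. Suppose for contradiction $n+e\ge d$; then $n+e\le\dim_{(q_0,\bar q_0)}\sM=\dim_{q_0}X\le d\le n+e$, so $\sZ$ is a top-dimensional component of $\sM$ at $(q_0,\bar q_0)$. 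Since $e<n$, $\sZ$ is not $\sigma$-invariant, so it occurs in a conjugate pair $\{\sZ,\sigma\sZ\}$; minimality of the complexification $\sM$ makes this pair non-redundant, so there is a smooth point $q_1$ of $Y$ arbitrarily close to $q_0$ at which $\sM$ locally equals $\sZ\cup\sigma\sZ$. Then $\dim_{(q_1,\bar q_1)}\sM=n+e$, while the real points of $\sZ\cup\sigma\sZ$ near $(q_1,\bar q_1)$ are just $Y$, so $\dim_{q_1}X=2e$; by coherence $n+e=\dim_{(q_1,\bar q_1)}\sM=\dim_{q_1}X=2e$, forcing $e=n$ — a contradiction. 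Hence $n+e<d$, i.e.\ $\dim X_{[n]}\le d-n-1$ (which is $n-2$ for a hypersurface, matching Theorem~\ref{thm:mainhyper}). For non-coherent $X$, pass locally to the coherator $\hat X$ — the real points of the complexification of the germ $(X,p)$, a coherent subvariety with $\hat X\supseteq X$ and $\dim\hat X=\dim X=d$ (its components are the real loci of the complexifications of those of $X$, of the same dimensions); since $I_q(\hat X)\subseteq I_q(X)$ we get $\Sigma_q\hat X\supseteq\Sigma_q X$, so $X_{[n]}$ is contained in the set of Segre-degenerate points of $\hat X$, which by the coherent case is locally a complex-analytic subvariety of dimension $\le d-n-1$.

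The main obstacle is the semianalyticity in part (i) for non-coherent $X$: the analytic set $A_k\cap\{\zeta=\bar z\}$ only contains $X_{[k]}$, since $\Sigma_q X$ can be strictly smaller than $\Sigma_q^U X$. The key is that $\Sigma_q X$ is obtained from the germ of $\sM$ at $(q,\bar q)$ by retaining exactly the irreducible components lying in the minimal complexification of $(X,q)$ — equivalently, those whose real locus is locally of full real dimension at $(q,\bar q)$ — and deleting the rest. For each fixed irreducible component of $\sM$ its real locus is a real-analytic (hence semianalytic) subset of $X$, and ``that real locus has, at $(q,\bar q)$, dimension equal to the component's complex dimension'' is a semianalytic condition on $q$; stratifying $\sM$ by the number and dimensions of its local irreducible components and intersecting the resulting finite decomposition of $U$ with the analytic sets $A_\ell\cap\{\zeta=\bar z\}$ then expresses $X_{[k]}\cap U$ as a finite Boolean combination of semianalytic sets, hence a semianalytic set — and since no projections are taken one never leaves the semianalytic category. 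I expect the bulk of the work to lie in (a) showing the full-real-dimension condition on a component at $(q,\bar q)$ is semianalytic and compatible with the local-component stratification of $\sM$, and (b) the uniform finiteness of strata and components over a neighborhood of $p$.
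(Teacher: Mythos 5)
Your treatment of closedness, of the coherent case (Remmert's fiber-dimension theorem applied to $\pi(z,\xi)=\xi$ on the complexification, then intersecting with the diagonal), and of the containment of $X_{[n]}$ in the complex variety $\{q:\bar q\in W\}=X_{U[n]}$ all match the paper's argument; your cylinder argument for the bound $\dim X_{[n]}\le d-n-1$ is a sound variant of the paper's (which instead observes via $\sigma$-symmetry that $X_{U[n]}\subset\Sigma^U_z X$ for every $z$, and that dimension $d-n$ would force a product component of the irreducible $\sX^U$). Note the stated bound $n-d-1$ in the theorem is a typo for $d-n-1$, which is what both you and the paper actually prove. However, your reduction of the noncoherent dimension bound to a ``coherator'' $\hat X$ fails: the real points of the complexification of $(X,p)$ need not form a coherent subvariety (the Whitney-umbrella phenomenon; e.g.\ the paper's Example~\ref{example:nonvar} is already the real locus of its own complexification and is not coherent), so the coherent case cannot be invoked for $\hat X$. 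This gap is repairable --- run your cylinder argument directly on $\sX^U$ for a good, irreducible $U$, as the paper does in Proposition~\ref{prop:XUnsubvariety}, which needs no coherence --- but as written the step rests on a false premise.

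The more serious gap is the semianalyticity of $X_{[k]}$ for noncoherent $X$, which is the heart of the theorem and which your proposal only sketches. Your key claim --- that $\Sigma_q X$ is obtained from the germ of $\sM$ at $(q,\bar q)$ by retaining the irreducible components of $\sM$ whose real locus has full dimension --- is false at points where $X$ drops dimension: at a ``handle'' point $q$ of Example~\ref{example:nonvar} the germ of $\sM$ is irreducible of dimension $5$ with real locus of dimension $2$, so your recipe retains nothing, while $\sX_q$ is a $2$-dimensional subvariety properly contained in that single component and $\Sigma_q X$ is a line. The correct statement requires first splitting off the lower-dimensional part of $X$ (the paper's induction via $\overline{X^*}$), and even then the local components of $\sM$ along its non-normal locus cannot be tracked by a finite, globally defined stratification of the base as $q$ varies --- separating them is exactly what the normalization $h\colon\sY\to\sX^U$ accomplishes. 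Once you work on $\sY$ you must push the resulting semianalytic set forward by the finite map $h$, so, contrary to your remark that ``no projections are taken,'' a projection is unavoidable, and one needs the lemma (proved via \L{}ojasiewicz--Tarski--Seidenberg) that finite \emph{holomorphic} maps preserve semianalyticity. Your items (a) and (b), which you defer as ``the bulk of the work,'' are precisely where the proof lives, and the plan you outline for them does not go through without these additional tools.
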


The sets $X_{[k]}$ are nested: $X_{[k+1]} \subset X_{[k]}$.
If $X$ is of pure dimension $d \geq n$, we find that
$X_{[d-n]} = X$.  Then
$X_{[n]} \subset \cdots \subset X_{[k]} \subset \cdots \subset X_{[d-n]}=X$.
If, furthermore, there exists a regular point of $X$ where $X$
is a generic submanifold,
then $X_{[d-n+1]}$
(the reasonable definition of ``Segre-degenerate points'' in this case)
is a semianalytic subset of $X$ of dimension less than $d$,
since where $X$ is a generic submanifold
the dimension of the Segre variety is necessarily $d-n$.
We avoid defining the term Segre-degenerate for general $X$
as the Segre varieties can be degenerate in various ways;
it is better to just talk about the sets $X_{[k]}$
or the sets $X_{[k]} \setminus X_{[k+1]}$.
In any case, since the sets $X_{[k]}$ are semianalytic,
every reasonable definition of ``\emph{Segre-degenerate}'' based on dimension leads
to a semianalytic set.

Notice that for $k < n$, the set $X_{[k]}$ is not necessarily complex even if it is
a proper subset of a coherent $X$, see Example~\ref{example:noncomplex}.

The structure of this paper is as follows.  First, we cover some preliminary
results on subvarieties and semianalytic sets in \S~\ref{sec:prelim}.
We introduce Segre varieties in the singular case in \S~\ref{sec:segre}.
In \S~\ref{sec:coh}, we prove the simpler results for the coherent case,
and we cover the noncoherent case in \S~\ref{sec:noncoh}.
In \S~\ref{sec:examples} we present some of the examples showing that the
results are optimal and particularly illustrating the degeneracy of the noncoherent case.

The author would like to acknowledge Fabrizio Broglia for very insightful
comments and pointing out some missing hypotheses.  The author would also
like to thank the anonymous referee and also Harold Boas for careful reading
of the manuscript and for suggesting quite a few improvements to the exposition.


\section{Preliminaries} \label{sec:prelim}

We remark that the content of this section is not new but totally classical,
and the degeneracies shown in the examples have been known for a long time,
already by Cartan, Whitney, Bruhat, and others.  See e.g. \cites{Cartan57,WhitneyBruhat}.

\begin{defn}
Let $U \subset \R^k$ (respectively $U \subset \C^k$) be open.
The set
$X \subset U$ is \emph{a real-analytic subvariety} (resp.\ a
\emph{complex-analytic subvariety}) of $U$ if for each point $p \in U$, there exists a neighborhood
$V \subset U$ of $p$ and a set of real-analytic (resp.\ holomorphic)
functions $\sF(V)$ such that
\begin{equation}
X \cap V = \{ p \in V : f(p) = 0 \text{ for all } f \in \sF(V) \} .
\end{equation}
Write $X_{\text{reg}} \subset X$ for the set of points which are \emph{regular}, that is,
\begin{equation}
\begin{split}
X_{\text{reg}}
\overset{\text{def}}{=}
\{ p \in X : {} &\exists \text{ neighborhood $V$ of $p$, such that }
\\
& V \cap X \text{ is a real-analytic (resp.\ complex) submanifold}  \} .
\end{split}
\end{equation}
The set of \emph{singular points} is the complement:
$X_{\text{sing}}
\overset{\text{def}}{=}
X \setminus X_{\text{reg}}$.
The dimension of $X$ at $p \in X_{\text{reg}}$, written as $\dim_p X$,
is the real (resp.\ complex) dimension of the real-analytic (resp.\ complex)
manifold at $p$.  The dimension of $X$, written as $\dim X$,
is the maximum dimension at any regular point.
The dimension of $X$ at $p \in X_{\text{sing}}$ is the minimum dimension of $X \cap V$
over all neighborhoods $V$ of $p$.
Define
\begin{equation}
X^*
\overset{\text{def}}{=}
\{ p \in X_{\text{reg}} : \dim_p X = \dim X \} .
\end{equation}
A variety or germ is \emph{irreducible} if it cannot be written as a union
of two proper subvarieties.
Let $I_q(X)$ denote the ideal of germs $(f,q)$ of functions that vanish
on the germ $(X,q)$.
\end{defn}

An \emph{analytic space} is, like an abstract manifold, a topological space with an
atlas of charts with real-analytic (resp.\ holomorphic) transition maps, but the
local models are subvarieties rather than open sets of $\R^n$ or $\C^n$.
See e.g. \cites{TopicsReal,Whitney:book}.

Subvarieties are closed subsets of $U$.
If a topology on $X$ is required, we take the subspace topology.
Unlike in the complex case, a real-analytic subvariety
can be a $C^k$-manifold while
being singular as a subvariety.  For example, $x^2-y^{2k+1} = 0$ in $\R^2$.
Also, in the real case, the set of singular points need not be a subvariety and
$X^*$ need not equal $X_{\text{reg}}$.

\begin{defn}[see e.g. \cites{Loja:semi,BM:semisub}]
For a set $V$ (an open set in $\R^n$, or a subvariety),
let $\sS\bigl(C^\omega(V)\bigr)$ be the smallest family of sets (the intersection of all such families) that 
is closed under finite unions, finite intersections, and complements of sets
of the form
\begin{equation}
\bigl\{ x \in V : f(x) \geq 0 \bigr\} ,
\end{equation}
where $f \in C^\omega(V)$ ($f$ real-analytic in $V$, or a restriction
of a real-analytic function if $V$ is a subvariety). 

A set $X \subset U$ is \emph{semianalytic} (in $U$) if for each $p \in U$,
there is a neighborhood $V$ of $p$
such that $X \cap V \in \sS\bigl(C^\omega(V)\bigr)$.
Here $U$ is an open set in $\R^n$, a subvariety, or an analytic
space.
\end{defn}

Note that $\{ x : f(x) \leq 0 \} = \{ x : -f(x) \geq 0 \}$.
Equality is obtained by intersecting 
$\{ x : f(x) \geq 0 \}$ and $\{ x : -f(x) \geq 0 \}$.  Complement 
obtains sets of the form $\{ x : f(x) > 0 \}$ and
$\{ x : f(x) \neq 0 \}$.  Thus we have all
equalities and inequalities.

Subvarieties are
semianalytic, but the family of semianalytic sets is much richer.
If $X$ is a complex-analytic subvariety, then $X_{\text{sing}}$ is a
complex-analytic subvariety,
while if $X$ is only real-analytic, then $X_{\text{sing}}$ is only a
semianalytic subset.

\begin{example}
The Whitney umbrella, $sx^2 = y^2$ in $\R^3$ using coordinates $(x,y,s)$,
is a set $X$ where
$X_{\text{sing}}$ is the set given by $x=0$, $y=0$, and $s \geq 0$.
\end{example}

It is a common misconception related to the subject of this paper
to think that the set of singular points of a real subvariety $X$ can be defined
by the vanishing of the derivatives of functions that vanish on $X$.
For a subvariety $X$ defined near $p$, it is
possible that $d\psi$ vanishes on some regular points of $X$ arbitrarily
near $p$ for every function $\psi$ defined near $p$ such that $\psi = 0$ on
$X$.
Before proving this fact, let us prove a simple lemma.

\begin{lemma} \label{lemma:cone}
Suppose $X = \{ x \in \R^k : P(x) = 0 \}$ for an irreducible homogeneous polynomial $P$
(irreducible in the ring of polynomials)
and $X$ is a hypersurface (dimension $k-1$).

If $(f,0)$ is a germ of a real-analytic function
that vanishes on $X$, then $(f,0)$ is a multiple of the germ $(P,0)$.
In other words, $I_0(X)$ is generated by the germ $(P,0)$.
\end{lemma}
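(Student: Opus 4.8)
The plan is to reduce the lemma to a polynomial divisibility statement and then pass from polynomials to analytic germs. Since $P$ is homogeneous, $X=\{P=0\}$ is a cone, so if $(f,0)$ vanishes on $X$, I expand $f=\sum_{j\ge 0}f_j$ into homogeneous polynomial parts and use $0=f(tx)=\sum_j t^j f_j(x)$ for $x\in X$ and small $t$ to conclude that each $f_j$ vanishes on $X$. Thus it suffices to prove the algebraic fact: \emph{any $g\in\R[x]$ vanishing on $X=\{x\in\R^k:P(x)=0\}$ is divisible by $P$ in $\R[x]$}. Granting this, $f_j=Pg_j$ for homogeneous polynomials $g_j$, so $f=Ph$ with $h=\sum_j g_j$ at first only a formal power series; but $\R[[x]]$ is faithfully flat over the ring $\R\{x\}$ of convergent power series, so $P\R[[x]]\cap\R\{x\}=P\R\{x\}$, and hence $h$ is convergent (alternatively one sees this by Weierstrass division after a generic linear change of coordinates making $P$ regular in one variable). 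This gives $(f,0)=(P,0)(h,0)$, so $I_0(X)\subseteq(P,0)$; the reverse inclusion is immediate since $P$ vanishes on $X$.

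The core is the algebraic fact, where the hypothesis $\dim X=k-1$ is what matters. Being irreducible, $P$ is squarefree. I first claim $P$ is irreducible over $\C$: the only alternative is $P=c\,Q\overline{Q}$ with $Q$ irreducible over $\C$ and not an associate of $\overline{Q}$, but then $P(x)=c\sabs{Q(x)}^2$ on $\R^k$, so $X=V_\R(\Re Q)\cap V_\R(\Im Q)$ with $\Re Q$ and $\Im Q$ coprime, forcing $\dim X\le k-2$, a contradiction. So $V:=\{z\in\C^k:P(z)=0\}$ is an irreducible, reduced complex hypersurface, and its regular locus $V^{\mathrm{reg}}$ is connected. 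Next, $B:=\{x\in X:\nabla P(x)=0\}$ is the real trace of $\operatorname{Sing}V$, hence has dimension at most $k-2<\dim X$, so $\Omega:=X\setminus B$ is a nonempty real-analytic submanifold of $\R^k$ of dimension $k-1$. At any $q\in\Omega$ the manifold $\Omega$ sits inside $V^{\mathrm{reg}}$ with $T_q\Omega=T_qV\cap\R^k$ a real hyperplane of $\R^k$; since $\R^k\cap i\R^k=\{0\}$, this is a maximally totally real subspace of $T_qV$, so $\Omega$ is, near $q$, a maximally totally real real-analytic submanifold of the complex manifold $V^{\mathrm{reg}}$.

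To finish: if $g\in\R[x]$ vanishes on $X$, then $g$ vanishes on $\Omega$, and, viewed as a holomorphic function near $q$, it vanishes on the maximally totally real submanifold $\Omega$; by the identity principle for such submanifolds it vanishes on a neighborhood of $q$ in $V^{\mathrm{reg}}$, hence on all of $V^{\mathrm{reg}}$ by connectedness, hence on $V$. By Hilbert's Nullstellensatz, $g\in\sqrt{(P)}=(P)$ (the ideal is radical because $P$ is squarefree), so $P\mid g$.

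I expect the main obstacle to be exactly this passage from a topological hypothesis to algebra: the point is that $\dim X=k-1$ (as opposed to $X$ being a lower-dimensional ``isolated'' piece, as for $x^2+y^2$ in $\R^2$) is precisely what forces $P$ to be irreducible over $\C$ and forces $X$ to have $(k-1)$-dimensional smooth points where $\nabla P\neq 0$; squarefreeness of $P$ is what rules out phenomena like $x^2-y^3$, where a regular point of the real variety can still be a critical point of the defining polynomial. An alternative to the complex-geometric argument is to invoke the real Nullstellensatz (Risler; see Bochnak--Coste--Roy): for irreducible $P$, the principal ideal $(P)$ is real exactly when $P$ changes sign, equivalently when $\dim_\R V_\R(P)=k-1$, whence $I\bigl(V_\R(P)\bigr)=(P)$.
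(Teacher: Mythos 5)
Your proof is correct, and its skeleton is the same as the paper's: decompose $f$ into homogeneous parts using the cone structure, reduce to the statement that every polynomial vanishing on $X$ is divisible by $P$, and then pass back to germs. The differences are in how the two nontrivial steps are handled. For the polynomial divisibility, the paper simply cites Theorem 4.5.1 of Bochnak--Coste--Roy (the real Nullstellensatz criterion for a principal ideal $(P)$ to be real, which is exactly where the hypothesis $\dim X = k-1$ enters); you instead give a self-contained complex-geometric argument (irreducibility of $P$ over $\C$ via the $cQ\overline{Q}$ dichotomy, the smooth real points forming a maximally totally real submanifold of $V^{\mathrm{reg}}$, the identity principle, connectedness of $V^{\mathrm{reg}}$, and the complex Nullstellensatz), while also noting the BCR route as an alternative. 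Both are fine; yours is longer but makes visible exactly where $\dim X = k-1$ is used, which you correctly identify as the crux. You also explicitly address a point the paper's proof passes over in silence: knowing $P \mid f_\ell$ for every homogeneous part only gives $f = Ph$ with $h$ a priori a formal power series, and one must argue that $h$ is convergent. Your appeal to faithful flatness of $\R[[x]]$ over the convergent ring (equivalently, Weierstrass division after a generic linear change making the homogeneous $P$ regular in one variable) is the standard and correct way to close that gap.
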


\begin{proof}
The proof is standard, it is a version of one of the claims from
the proof of Chow's theorem.
Clearly, $X$ is a real cone, that is, if $x \in X$ then
$\lambda x \in X$ for all $\lambda \in \R$.
Write a representative $f(x) = \sum_{\ell=0}^\infty f_\ell(x)$ in terms of
homogeneous parts.
Suppose $x \in X$, so $f(x) = 0$.  As $\lambda x \in X$,
then $f(\lambda x) \equiv 0$.  But then
$\sum_{\ell=0}^\infty f_\ell(\lambda x) =\sum_{\ell=0}^\infty \lambda^\ell
f_\ell(x)$
is identically zero, meaning $f_\ell(x) = 0$ for all $\ell$.
Since $X$ is a hypersurface, the polynomial $P$ generates the ideal of
all polynomials vanishing
on $X$ and thus $P$ divides all the polynomials $f_\ell$ (See e.g. Theorem 4.5.1
in~\cite{BCR}).
Thence, the germ $(P,0)$ divides the germ $(f,0)$.
\end{proof}

\begin{example} \label{example:regularbad}
Let us give an example of a pure 2-dimensional
real-analytic subvariety $X \subset \R^3$ with an isolated singularity
at the origin, such that
for any real-analytic defining function $\psi$ of $X$ near the origin, the set
where both $d\psi$ and $\psi$ vanish is a 1-dimensional subset of $X$.
Therefore, the set where the derivative vanishes for the defining function
is of larger dimension than the singular set, and $d \psi$ vanishes at some
regular points.  This subvariety will be a useful example later
(Example~\ref{example:regularSbad}), and it is a useful example of a
noncoherent subvariety where coherence breaks not because
of a smaller dimensional component.

Let $X$ be the subvariety of $\R^3$ in the coordinates $(x,y,s) \in \R^3$:
\begin{equation} \label{eq:example1}
(x^2+y^2)^6 - s^8x^3(s-x) = 0 .
\end{equation}
We claim that $X$ is as above.
Despite the singularity being just the origin,
for any real-analytic $\psi$ defined near the origin that vanishes on $X$,
we get $d \psi(0,0,s) = 0$, so the derivative vanishes on
\begin{equation}
\bigl\{ (x,y,s) \in \R^3 : x = 0, y=0 \bigr\} = \{ 0 \} \times \{
0 \} \times \R \subset X.
\end{equation}

As this example will be useful for Segre varieties, we
prove the claim in detail.
The subvariety in $\R^2$ defined by
$(x^2+y^2)^6 - x^3(1-x) = 0$
is irreducible.  Indeed, it is a connected compact submanifold.
To see that it is connected and compact, solve for $y$.
The tricky part is showing that the subvariety is nonsingular near the
origin, which can be seen by writing
$(x^2+y^2)^6 = x^3(1-x)$ and taking the third root to get
\begin{equation}
(x^2+y^2)^2 = x\sqrt[3]{1-x} .
\end{equation}
Near the origin, we can solve for $x$ using the implicit function theorem.

Homogenize $(x^2+y^2)^6 - x^3(1-x)$ with $s$ to get the set $X$ in $\R^3$ given by
\eqref{eq:example1}.
The set $X$ is a cone with an isolated singularity; it is a cone over
a manifold.
By Lemma~\ref{lemma:cone}, if $\psi$ vanishes on $X$, then
\begin{equation}
\psi = 
\bigl( (x^2+y^2)^6 - s^8x^3(s-x) \bigr) \varphi.
\end{equation}
In other words, on $X$, $d \psi$ must vanish where 
the derivative of $(x^2+y^2)^6 - s^8x^3(s-x)$ vanishes.
\end{example}


\section{Segre varieties} \label{sec:segre}

Consider subvarieties of $\C^n \cong \R^{2n}$.
Let $U \subset \C^n$ be open and $X \subset U$ a real-analytic subvariety.
Write $U^{\text{conj}} = \{ z : \bar{z} \in U \}$ for the complex
conjugate.  Let $\iota(z) = (z,\bar{z})$ be the embedding of $\C^n$ into
the ``diagonal'' in $\C^n \times \C^n$.  Denote by $\sX^U$ the smallest
complex-analytic subvariety of $U \times U^{\text{conj}}$ such that $\iota(X) \subset \sX^U$.
By smallest we mean the intersection of all such subvarieties.
It is standard that there exists a small enough $U$ (see below)
such that
$\sX^U \cap \iota(\C^n) = \iota(X)$.
Let $\sigma \colon \C^n \times \C^n \to \C^n \times \C^n$ denote the
involution $\sigma(z,w) = (\bar{w},\bar{z})$.  Note that the ``diagonal''
$\iota(\C^n)$ is the fixed set of $\sigma$.

\begin{prop}
Let $U \subset \C^n$ be open and
$X \subset U$ be a real-analytic subvariety.
Then $\sigma(\sX^U) = \sX^U$.
\end{prop}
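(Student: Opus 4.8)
The plan is to exploit the minimality (smallest-variety) characterization of $\sX^U$ together with the fact that $\sigma$ is an anti-holomorphic involution of $\C^n \times \C^n$ fixing the diagonal $\iota(\C^n)$. First I would record that $\sigma$ carries complex-analytic subvarieties to complex-analytic subvarieties: although $\sigma(z,w) = (\bar w, \bar z)$ is not holomorphic, it is the composition of the ``flip'' $(z,w) \mapsto (w,z)$ with the coordinatewise conjugation $(z,w) \mapsto (\bar z, \bar w)$, and if $V$ is cut out near a point by holomorphic functions $g_j(z,w)$, then $\sigma^{-1}(V) = \sigma(V)$ is cut out near the corresponding point by the functions $(z,w) \mapsto \overline{g_j(\bar w, \bar z)}$, which are holomorphic in $(z,w)$. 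One also checks $\sigma(U \times U^{\text{conj}}) = U \times U^{\text{conj}}$, so $\sigma$ is a genuine (anti-holomorphic) automorphism of the ambient open set where $\sX^U$ lives.

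Next I would observe that $\sigma$ fixes $\iota(X)$ pointwise: for $p \in X$ we have $\sigma(\iota(p)) = \sigma(p,\bar p) = (\overline{\bar p}, \bar p) = (p, \bar p) = \iota(p)$, using that the diagonal is exactly the fixed-point set of $\sigma$. Hence $\iota(X) \subset \sigma(\sX^U)$. By the previous paragraph $\sigma(\sX^U)$ is a complex-analytic subvariety of $U \times U^{\text{conj}}$ containing $\iota(X)$, so by the defining minimality of $\sX^U$ we get $\sX^U \subseteq \sigma(\sX^U)$. Applying $\sigma$ to this inclusion and using $\sigma \circ \sigma = \id$ yields $\sigma(\sX^U) \subseteq \sX^U$, and the two inclusions give equality.

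I do not expect a serious obstacle here; the only point requiring a little care is the first step — verifying that the image of a complex-analytic subvariety under the anti-holomorphic map $\sigma$ is again complex-analytic — and that is handled by the factorization of $\sigma$ through conjugation and the flip, noting that $\overline{g_j(\bar w,\bar z)}$ is holomorphic whenever $g_j$ is. One should also keep the neighborhood $U$ small enough that the standing assumption $\sX^U \cap \iota(\C^n) = \iota(X)$ is in force, though that fact is not actually needed for this particular statement; the argument uses only the minimality of $\sX^U$ and $\sigma$-invariance of $\iota(X)$.
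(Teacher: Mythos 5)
Your proof is correct and follows essentially the same route as the paper: observe that $\sigma(\sX^U)$ is again a complex-analytic subvariety (being cut out by anti-holomorphic, hence after conjugation holomorphic, functions), that $\iota(X)$ is pointwise fixed by $\sigma$, and conclude by the minimality of $\sX^U$. Your version merely spells out the two inclusions via $\sigma\circ\sigma=\id$, which the paper leaves implicit.
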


\begin{proof}
The set $\sigma(\sX^U)$ is a complex-analytic subvariety as it is an defined by
vanishing of anti-holomorphic functions,
and hence by holomorphic functions.  As $X$ is
fixed by $\sigma$, we have $X \subset \sigma(\sX^U) \cap \sX^U$, and the
result follows as $\sX^U$ is the smallest subvariety containing $X$.
\end{proof}

The ideal $I_p(X)$ can be generated by the real and imaginary parts
of the generators of the ideal of germs of holomorphic functions
defined at $(p,\bar{p})$ in the complexification that vanish on the
germ of $\iota(X)$ at $(p,\bar{p})$.
Call the ideal of these holomorphic functions $\sI_p(X)$.

Given a germ of a real-analytic subvariety $(X,p)$, denote by
$\sX_p$ the smallest germ of a complex-analytic subvariety of
$\bigl(\C^n \times \C^n, (p,\bar{p}) \bigr)$ that
contains the image of $(X,p)$ by $\iota$.
The germ $\sX_p$ is called the \emph{complexification} of $(X,p)$.
It is not hard to see that the irreducible components of $(X,p)$
correspond to the irreducible components of $\sX_p$; if $(X,p)$
is irreducible, so is $\sX_p$.
In the theory of real-analytic subvarieties,
$\sX^U$ would not be called a \emph{complexification} of $X$ unless
$\bigl(\sX^U,(p,\bar{p})\bigr) = \sX_p$ for all $p \in X$, and that cannot
always be achieved.

As we will need a specific neighborhood often, we make
the following definition.

\begin{defn}
Let $X \subset U$ be a real-analytic subvariety of dimension $d$ of an open set $U \subset \C^n$.
We say $U$ is \emph{good for $X$ at $p \in X$} if the
following conditions are satisfied:
\begin{enumerate}[(i)]
\item
$U$ is connected.
\item
The real dimension of $(X,p)$ is $d$ and the complex dimension of $\sX_p$
and $\sX^U$ is also $d$.
\item
There exists a real-analytic function $\psi \colon U \to \R^k$
whose complexification converges in $U \times U^{\text{conj}}$,
whose zero set is $X$, and whose germ $(\psi,p)$ generates $I_p(X)$.
\item
$\sX^U \cap \iota(\C^n) = \iota(X)$.
\item
$\bigl( \sX^U , (p,\bar{p}) \bigr) = \sX_p$.
\item
The irreducible components of $\sX^U$ correspond in a one-to-one
fashion to the irreducible components of the germ $\sX_p$.
\end{enumerate}
If $U' \subset U$ is good for $X \cap U'$ at $p$ we say simply that $U'$ is good
for $X$ at $p$.
\end{defn}

\begin{prop}
Suppose $U \subset \C^n$ is open, $X \subset U$ is a real-analytic
subvariety, and $p \in X$.
Then there
exists a neighborhood $U' \subset U$ of $p$ such that $U'$ is good for $X$ at
$p$.

Furthermore, for any neighborhood $W$ of $p$, there exists a neighborhood
$W' \subset W$ of $p$ that is good for $X$ at $p$.
\end{prop}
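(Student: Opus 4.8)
The plan is to build the required neighborhood in two stages, after one harmless initial shrinking. First I would shrink $U$ once so that $\dim(X\cap U)=\dim_p X=:d$ (possible since $\dim(X\cap U')$ decreases with $U'$ and stabilizes at $\dim_p X$); from now on $X\cap U'$ is regarded as the ambient variety, so the real dimension of $(X,p)$ equals $d$, and the classical facts recalled in \S\ref{sec:segre} (complexification preserves dimension and matches up irreducible components) give $\dim_\C\sX_p=\dim_\R(X,p)=d$. Thus, of the six conditions, only $\dim_\C\sX^{U'}=d$ in (ii) and the component correspondence (vi) carry real content: (i) is arranged by taking balls, and (iii)--(v) will fall out of the construction in the first stage.

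Stage 1 (the defining function). Here I would exploit the Hermitian symmetry of the holomorphic ideal. Since $\sX_p$ is $\sigma$-invariant and $\sigma$ fixes the diagonal, $\sI_p(X)$ is stable under $G\mapsto G^{*}$, where $G^{*}(z,w):=\overline{G(\bar w,\bar z)}$ (if $G$ vanishes on $\iota(X,p)$, so does $G^{*}$). The local ring of holomorphic germs at $(p,\bar p)$ is Noetherian, so $\sI_p(X)$ is finitely generated; replacing a set of generators by the elements $G+G^{*}$ and $i(G-G^{*})$ I may take generators $G_1,\dots,G_k$ with $G_j=G_j^{*}$. Then each $g_j(z):=G_j(z,\bar z)$ is real-valued, $\psi:=(g_1,\dots,g_k)$ has $(\psi,p)$ generating $I_p(X)$ (its components are the real and imaginary parts of the $G_j$ on the diagonal), and---the payoff of the symmetry---the complexification of $\psi$ is literally $(z,w)\mapsto(G_1(z,w),\dots,G_k(z,w))$, which converges on $U_1\times U_1^{\text{conj}}$ as soon as the ball $U_1$ around $p$ is small enough that this bidisc lies in a common domain $\Delta$ of the $G_j$. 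Choosing such a $U_1$ with moreover $\{\psi=0\}\cap U_1=X\cap U_1$ (the two sides agree as germs at $p$) secures (i) and (iii). Then with $Y:=\{G_1=\dots=G_k=0\}\subset\Delta$, so $(Y,(p,\bar p))=\sX_p$, I would note $\iota(X\cap U_1)\subset Y$, hence $\sX^{U_1}\subset Y\cap(U_1\times U_1^{\text{conj}})$ by minimality of $\sX^{U_1}$; intersecting with $\iota(\C^n)$ gives $\sX^{U_1}\cap\iota(\C^n)\subset\iota(\{\psi=0\}\cap U_1)=\iota(X\cap U_1)\subset\sX^{U_1}$, which is (iv), and passing to germs at $(p,\bar p)$ gives $(\sX^{U_1},(p,\bar p))\subset\sX_p$, while the reverse inclusion is automatic from minimality of $\sX_p$. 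That is (v).

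Stage 2 (shrinking to match components). Since $\sX^{U_2}\subset\sX^{U_1}\cap(U_2\times U_2^{\text{conj}})$ for any ball $U_2\subset U_1$, conditions (iv) and (v) persist for $U_2$; in particular $(\sX^{U_2},(p,\bar p))=\sX_p=S_1\cup\dots\cup S_m$, the decomposition into local irreducible components. By the classical fact that a complex-analytic subvariety, restricted to a small enough neighborhood of one of its points, decomposes into irreducible components in bijection with the local components of its germ there, I would choose $U_2\subset U_1$ with $\sX^{U_1}\cap(U_2\times U_2^{\text{conj}})=C_1\cup\dots\cup C_m$, the germ of $C_l$ at $(p,\bar p)$ being $S_l$. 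Then $\sX^{U_2}$, being sandwiched between $\iota(X\cap U_2)$ and $\sX^{U_1}\cap(U_2\times U_2^{\text{conj}})=C_1\cup\dots\cup C_m$, agrees with $C_1\cup\dots\cup C_m$ on a neighborhood of $(p,\bar p)$ (both having germ $\sX_p$ there), so $\sX^{U_2}\cap C_l$ contains a nonempty open subset of the irreducible variety $C_l$, whence $\sX^{U_2}\supset C_l$ for each $l$; therefore $\sX^{U_2}=\sX^{U_1}\cap(U_2\times U_2^{\text{conj}})$, its irreducible components are exactly $C_1,\dots,C_m$, in bijection with $S_1,\dots,S_m$ (this is (vi)), and $\dim_\C\sX^{U_2}=\max_l\dim_\C S_l=\dim_\C\sX_p=d$ (completing (ii)). So $U_2$ is good for $X$ at $p$, and for the ``Furthermore'' assertion one runs the same construction with $U$ replaced by $U\cap W$.

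I expect the main obstacle to be Stage 2: one must rule out that passing to a smaller neighborhood produces a genuinely smaller or differently decomposed global complexification (a component being dropped or split), which is why the argument has to go through the germ $\sX_p$ and the local-decomposition theorem for complex subvarieties rather than by naively intersecting $\sX^{U_1}$ with smaller open sets. A more technical but essential point in Stage 1 is the symmetry $G\mapsto G^{*}$ of $\sI_p(X)$: an arbitrary choice of holomorphic generators need not restrict to real-valued functions on the diagonal, nor have a complexification converging near $(p,\bar p)$ when $p\ne\bar p$, so the symmetry is precisely what makes condition (iii) attainable.
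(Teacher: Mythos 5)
Your proposal is correct and follows essentially the same route as the paper's (sketched) proof: symmetrize a finite set of generators of $\sI_p(X)$ under the $\sigma$-conjugation to obtain the real defining function $\psi$ and conditions (iii)--(v), then shrink the neighborhood so that the irreducible components of $\sX^{U'}$ match those of the germ $\sX_p$. The paper leaves most of these steps as a sketch; you have filled in the details, but the method is the same.
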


\begin{proof}
The idea is standard (see e.g. \cite{TopicsReal}), but let us sketch a proof.
The main difficulty is mostly notational.
Take the germ $\sI_p(X)$ of complexified functions that
vanish on the germ of $\iota(X)$ at $(p,\bar{p})$.  Note that
$\sI_p(X)$ is closed under the conjugation taking
$\psi$ to $\overline{\psi \circ \sigma}$, that is,
$\psi(z,\zeta)$ to
$\bar{\psi}(\zeta,z)$.  It is generated by a
finite set of functions $f_1,\ldots,f_k$,
which are all defined in some polydisc $\Delta
\times \Delta^{\text{conj}}$ centered at $p$.  The real and imaginary parts of these
functions also generate an ideal, and this ideal must be equal to $I_p(X)$.
We can also assume that $\Delta$ is small enough that all the components
of the subvariety $\sV$ defined by $f_1,\ldots,f_k$ go through $(p,\bar{p})$
(in other words $\sV$ is the smallest subvariety of $\Delta$ containing
the germ of $\iota(X)$ at $(p,\bar{p})$).  Similarly, make $\Delta$ small
enough that the real and imaginary parts of $f_1,\ldots,f_k$
restricted to the diagonal, give the subvariety $X \cap \Delta$
all of whose components go through $p$.
We can take $\Delta$ to also be small enough that all components of
$\sX_p$ have distinct representatives in $\Delta$.
The set $\Delta$ is our $U'$.
\end{proof}

\begin{defn}
Suppose $U \subset \C^n$ is open and $X \subset U$ is a real-analytic
subvariety.
The \emph{Segre variety} of $X$ at $p\in U$ relative to $U$ is the set
\begin{equation}
\Sigma_p^U X
\overset{\text{def}}{=}
\bigl\{ z \in U : (z,\bar{p}) \in \sX^U \bigr\} .
\end{equation}
If $U' \subset U$, we write $\Sigma_p^{U'} X$ for 
$\Sigma_p^{U'} (X \cap U')$.

When $U'$ is good for $X$ at $p \in X$,
define the germ
\begin{equation}
\Sigma_p X
\overset{\text{def}}{=}
\bigl(\Sigma_p^{U'} X, p \bigr) .
\end{equation}

Define
\begin{align}
X_{[k]}
&
\overset{\text{def}}{=}
\bigl\{ z \in U : \dim \Sigma_z X \geq k \bigr\} ,
\\
X_{U [k]}
&
\overset{\text{def}}{=}
\bigl\{ z \in U : \dim_z \Sigma_z^U X \geq k \bigr\} .
\end{align}
\end{defn}

The germ $\Sigma_p X$ is well-defined by the proposition.  First, there
exists a good neighborhood of $p$, and any smaller good neighborhood of $p$
would give us the same germ of the complexification at $p$.

If $X$ is an irreducible hypersurface,
$X$ is \emph{Segre-degenerate} at $p \in X$ if $\Sigma_p X =
(\C^n,p)$, that is, if $p \in X_{[n]}$.
A point $p$ is \emph{Segre-degenerate relative to $U$} if
$\dim_p \Sigma_p^U X = n$,
that is, if $p \in X_{U [n]}$.  A key point of this paper is that
these two notions can be different.  We will see that $X_{U[n]}$ is always a
complex subvariety and contains $X_{[n]}$, and the two are not necessarily
equal even for a small enough $U$.
They may not be even of the same dimension.

For a general dimension $d$ set, we will simply talk about the sets $X_{[k]}$
and we will not make a judgement on what is the best definition for the word
``Segre-degenerate.''

A (smooth) submanifold is called \emph{generic} (see \cite{BER:book})
at $p$
if in some local holomorphic coordinates $(z,w) \in \C^{n-k} \times \C^k$ vanishing at $p$ it is
defined by
\begin{equation}
\Im w_1 = r_1(z,\bar{z},\Re w), \quad \ldots, \quad
\Im w_k = r_k(z,\bar{z},\Re w),
\end{equation}
with $r_j$ and its derivative vanishing at $0$.
For instance, a hypersurface is generic.

\begin{prop} \label{prop:genericman}
Suppose $X$ is a real-analytic submanifold of $\C^n$ of dimension $d$ (so codimension
$2n-d$) and $p \in X$.
\begin{enumerate}[(i)]
\item $\dim \Sigma_p X \leq \frac{d}{2}$.
In particular, if $d < 2n$, then $X_{[n]} = \emptyset$.
\item If $X$ is generic at $p$,
then $\Sigma_p X$ is a germ of a complex submanifold and $\dim \Sigma_p X = d-n$.
\end{enumerate}
\end{prop}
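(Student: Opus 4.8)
The plan is to work in local holomorphic coordinates $(z,w) \in \C^{n-k} \times \C^k$ vanishing at $p$ and to compute the Segre variety directly from a defining function. First I would handle part (ii), the generic case, since it is the more structured situation. Suppose $X$ is generic at $p$, defined near $p$ by $\Im w_j = r_j(z,\bar z,\Re w)$ for $j = 1, \ldots, k$, with $r_j$ and $dr_j$ vanishing at $0$. Rewrite these equations in the standard way: $\frac{w_j - \bar w_j}{2i} = r_j\bigl(z, \bar z, \frac{w_j + \bar w_j}{2}\bigr)$, which we may solve (by the implicit function theorem, since the relevant derivative is the identity at $0$) for $w_j$ as a real-analytic function of $(z, \bar z, \bar w)$, or equivalently express the ideal $I_p(X)$ as generated by $w_j - \Phi_j(z,\bar z,\bar w)$ for holomorphic (in all $2n$ slots, after complexification) $\Phi_j$ with $\Phi_j(0) = 0$ and $d\Phi_j(0) = 0$. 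The complexification $\sX_p$ is then the graph $w = \Phi(z,\zeta,\omega)$ over the $(z,\zeta,\omega) \in \C^{n-k}\times\C^{n-k}\times\C^k$ coordinates, which is a complex submanifold of dimension $d = 2(n-k) + k = 2n - k$, as it should be.

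**Computing the Segre variety for generic $X$.** Now $\Sigma_p X$ is the germ at $p=0$ of $\{z' \in U' : (z', \bar p) \in \sX^{U'}\}$, and since $\bar p = 0$, setting the ``$\bar{}$''-slot equal to $0$ in the graph equations gives $\Sigma_p X = \{(z,w) : w_j = \Phi_j(z, 0, 0),\ j=1,\ldots,k\}$. This is manifestly the graph of a holomorphic map over the $z$-variables, hence a germ of a complex submanifold of dimension $n-k = d-n$. (One should double-check the bookkeeping: the complexified variable names need to be tracked carefully, deciding whether $\bar p$ corresponds to the $\zeta$-slot or the $\omega$-slot, but the point is that fixing the antiholomorphic variables to $0$ leaves a graph over an $(n-k)$-dimensional holomorphic parameter space.) This proves (ii). The main thing to be careful about here is purely notational: keeping straight which of the $2n$ complexified coordinates is being frozen when we substitute $\bar p$, and confirming that the resulting object is exactly the germ $\Sigma_p X$ rather than $\Sigma_p^{U} X$ for some non-good $U$ — but for a submanifold any small enough $U$ is good, so this is not a real obstacle.

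**The dimension bound in general.** For part (i), I would argue as follows. Pick any small enough neighborhood $U'$ of $p$ that is good for $X$ at $p$, so that $\Sigma_p X = (\Sigma_p^{U'} X, p)$ and $\dim_{(p,\bar p)} \sX^{U'} = d$. Consider the holomorphic map $\pi \colon \sX^{U'} \to U'$ given by $\pi(z,w) = z$ (projection onto the first factor) and, dually, the map $\rho(z,w) = \bar w$ which takes $\iota(X)$ into $X$. The Segre variety $\Sigma_p^{U'} X$ is exactly $\pi\bigl(\rho^{-1}(\bar p) \cap \sX^{U'}\bigr)$, i.e.\ the first-factor projection of the fiber of $\rho$ over $\bar p$. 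Since $\rho^{-1}(\bar p) \cap \iota(\C^n)$ is, via $\iota$, just the single point $p$ (the diagonal meets a fixed antiholomorphic fiber in one point), and $\sigma$-symmetry relates the $z$-fibers and the $\bar w$-fibers, a fiber-dimension estimate on the proper map $\sX^{U'} \to U' \times U'^{\text{conj}}$ gives that a generic fiber of $\rho$ on $\sX^{U'}$ has dimension at most $d - (2n - d)\cdot 0$... more precisely I would use: $\sX^{U'}$ has dimension $d$, it maps to $U'^{\text{conj}}$ (dimension $n$) via $\rho$, and $\rho|_{\iota(X)}$ is a real-analytic isomorphism onto $X$ of real dimension $d$, forcing the complex fiber dimension of $\rho$ to be at most $d/2$; projecting this fiber by $\pi$ can only decrease dimension. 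Hence $\dim \Sigma_p X \le d/2 < n$ when $d < 2n$, giving $X_{[n]} = \emptyset$.

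**Expected main obstacle.** The genuinely delicate step is the fiber-dimension bound $\dim \Sigma_p X \le d/2$: one must justify that the complex dimension of the fiber $\{w : (z, w) \in \sX_p \text{ for some admissible } z\}$ is governed by the real dimension $d$ of $X$ rather than by the ambient complex dimension $n$. The cleanest route is probably to reduce to the irreducible case (the germ $(X,p)$ and $\sX_p$ have matching irreducible components by the discussion in \S\ref{sec:segre}), pick a regular point of $X$ near $p$ of dimension $d$, and compare the Segre variety there — but at a regular point $X$ need not be generic, so one cannot simply invoke (ii). Instead I expect to need the general fact that for \emph{any} real-analytic set of real dimension $d$, the complexification $\sX_p$ has complex dimension $d$, and then a rank computation for the holomorphic map $(z,w) \mapsto (z, \bar w)$ restricted to $\sX_p$, using that its image contains $\iota(X)$ which already has real dimension $d$; the image therefore has complex dimension $\ge d/2$ in each factor, which combined with $\dim \sX_p = d$ pins the fiber dimension at $\le d/2$. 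Making this rank/fiber argument rigorous — rather than hand-wavy — is where the real work lies.
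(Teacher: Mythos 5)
Your part (ii) is correct and essentially reproduces the paper's argument: freeze the antiholomorphic variables at $\bar p=0$ in the $k=2n-d$ complexified defining equations of a generic submanifold, note that they remain independent, and conclude that $\Sigma_p X$ is a graph of dimension $n-k=d-n$.

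Part (i), however, has a genuine gap. Your bound comes from a fiber-dimension count for the projection of $\sX^{U'}$ onto the antiholomorphic factor: since the image contains a real $d$-dimensional set, it has complex dimension at least $d/2$, so the \emph{generic} fiber has dimension at most $d-d/2=d/2$. But fiber dimension is only upper semicontinuous, so this controls the generic fiber and says nothing about the fiber over the particular point $(p,\bar p)$, which is the one computing $\Sigma_p X$; special fibers can be strictly larger, and for submanifolds this genuinely happens. Example~\ref{example:mfld} of the paper ($w=z\bar z$ in $\C^2$, $d=2$) has generic fiber dimension $0$ while $\dim \Sigma_0 X = 1 = d/2$: your argument yields the bound $0$ at generic points and nothing at the origin, which is exactly the point where the bound $d/2$ is needed and attained. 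The paper's proof of (i) is instead a pointwise linear-algebra count at $p$: the submanifold is cut out by $k=2n-d$ real equations $\rho_1=\cdots=\rho_k=0$ with $\R$-linearly independent differentials at $p$; writing $d\rho_j = \partial\rho_j + \overline{\partial\rho_j}$, if the holomorphic parts $\partial\rho_1(p),\ldots,\partial\rho_k(p)$ spanned a complex subspace of dimension $m$, the full real differentials would span a real space of dimension at most $2m$, so $m \geq k/2$. Hence after substituting the antiholomorphic variables by $\bar p$, the complexified equations still cut out a germ of codimension at least $k/2$, giving $\dim \Sigma_p X \leq n - k/2 = d/2$ at $p$ itself. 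Replacing your generic-fiber estimate by this differential count at the point $p$ is what is needed to close the gap.
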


\begin{proof}
We start with the generic case.
Using the defining functions above, $k=2n-d$, we note that if we plug in $\bar{w}=0$
and $\bar{z}=0$, we get $k$ linearly independent defining equations for a complex 
submanifold.

If $X$ is not generic, then we can write down similar equations but solve
for real and imaginary parts some of the variables.
Since these could conceivably be the real and
imaginary parts of the same variable, we may only get $\frac{k}{2} = \frac{2n-d}{2}$
independent equations, so the dimension of $\Sigma_p X$ could be as high as
$n - \frac{2n-d}{2} = \frac{d}{2}$.
\end{proof}

If $X$ is regular at $p$ but not generic,
the germ $\Sigma_p X$ could possibly be singular
and the dimension may vary as $p$ moves on the submanifold.  See
Example~\ref{example:mfld}.

Let us collect some basic properties of Segre varieties in the singular case.

\begin{prop} \label{prop:basicprops}
Let $X \subset U \subset \C^n$ be a real-analytic subvariety of dimension
$d$ and $p \in X$.  Then
\begin{enumerate}[(i)]
\item
$\displaystyle \Sigma_p X \subset \bigl( \Sigma_p^U X , p \bigr)$.
\item
$\displaystyle \dim \Sigma_p X \geq d-n$.
\item
$\displaystyle X_{[k]} \subset X_{U [k]}$
for all $k$.
\item
If $U$ is good for $X$ at $p$ and $d \leq n$, then $\dim \Sigma_p^U X < n$.
\item
If $d \leq n$, then $X_{[n]} = \emptyset$.
\item
$q \in \Sigma_q X$ if and only if $q \in X$, and so
$X_{[k]} \subset X$ for all $k = 0,1,2,\ldots,n$.
\item
If $U$ is good for $X$ at $p$, then
$q \in \Sigma_q^U X$ if and only if $q \in X$, and so
$X_{U [k]} \subset X$ for all $k = 0,1,2,\ldots,n$.
\end{enumerate}
\end{prop}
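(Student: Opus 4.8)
The statement collects seven basic properties of Segre varieties, so the plan is to dispatch each item by unwinding the relevant definitions; most are immediate, and the only content lies in a couple of dimension-counting arguments. First I would fix a neighborhood $U'$ that is good for $X$ at $p$, which exists by the proposition above. For (i): the germ $\Sigma_p X$ is by definition $(\Sigma_p^{U'} X, p)$ for such a good $U'$, and since $\sX^{U'} \subset \sX^U$ as subvarieties of the appropriate product (the former is the smallest containing $\iota(X \cap U')$, and restriction of the latter contains it), we get $\Sigma_p^{U'} X \subset \Sigma_p^U X$ near $p$, hence the germ inclusion. Item (iii) is then just (i) applied at every point, together with the definitions of $X_{[k]}$ and $X_{U[k]}$. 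Item (vi) follows because $(q,\bar q) = \iota(q) \in \iota(X) \subset \sX^{U'}$ exactly when $q \in X$, so $q \in \Sigma_q X$ iff $q \in X$; consequently $\dim \Sigma_q X \geq 0$ only makes sense when the germ is nonempty, i.e. $q \in X$, giving $X_{[k]} \subset X$. Item (vii) is the same observation with $\sX^U$ in place of $\sX^{U'}$, using that $\sX^U \cap \iota(\C^n) = \iota(X)$ for $U$ good (or after shrinking).

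For the dimension bound (ii), I would argue as follows. Work in a good $U'$ and let $\psi$ be a real-analytic map generating $I_p(X)$ whose complexification converges on $U' \times (U')^{\text{conj}}$. The complexification $\sX_p = (\sX^{U'},(p,\bar p))$ has complex dimension $d$. Now $\Sigma_p^{U'} X$ is the slice $\{ z : (z,\bar p) \in \sX^{U'} \}$, i.e. the fiber of the projection $\sX^{U'} \to (U')^{\text{conj}}$, $(z,\zeta) \mapsto \zeta$, over the point $\bar p$. Since this projection maps a $d$-dimensional variety into an $n$-dimensional space, every fiber has dimension at least $d - n$; in particular the fiber over $\bar p$, which contains $(p,\bar p)$, has a component through $(p,\bar p)$ of dimension $\geq d-n$, so $\dim \Sigma_p X \geq d-n$. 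The key tool here is the standard lower semicontinuity / fiber-dimension theorem for holomorphic maps of complex-analytic varieties. Items (iv) and (v) are then immediate consequences of an \emph{upper} bound: if $U$ is good for $X$ at $p$ and $d \leq n$, then $\sX^U$ has dimension $d \leq n$ and, crucially, is \emph{not} contained in $\iota(\C^n)$-fibers in a degenerate way — more precisely $\sX^U \cap \iota(\C^n) = \iota(X)$ has dimension $d < 2n$ forces the slice $\Sigma_p^U X$ to be a proper subvariety, so $\dim_p \Sigma_p^U X \leq d - 1 < n$ (one uses that a $d$-dimensional complex variety cannot have an $n$-dimensional fiber over a point when $d \leq n$ unless it equals that fiber, which is excluded since $\sX^U \not\subset \{ \zeta = \bar p\}$ as it meets the diagonal in the $d$-dimensional set $\iota(X)$). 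Then (v) follows from (iii) and (iv), or directly: $X_{[n]} \subset X_{U[n]} = \emptyset$.

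The step I expect to require the most care is (iv): nailing down the strict inequality $\dim_p \Sigma_p^U X < n$ rather than just $\leq n$. The clean way is to invoke that $\sX^U$ is a $d$-dimensional complex variety with $d \leq n$, so if some component $Z$ of $\Sigma_p^U X = \{z : (z,\bar p) \in \sX^U\}$ had dimension $n$ at $p$, then $Z \times \{\bar p\}$ would be an $n$-dimensional subvariety of $\sX^U$; since $d \leq n$ this would be an entire $d = n$-dimensional component of $\sX^U$ of the form (complex variety) $\times \{\bar p\}$, which is then disjoint from the diagonal $\iota(\C^n)$ except possibly at $(p,\bar p)$ — contradicting condition (vi) of goodness (components of $\sX^U$ correspond to components of $\sX_p$, each of which must meet $\iota(X)$ in the expected way) together with $\sX^U \cap \iota(\C^n) = \iota(X)$ having dimension $d < 2n$ at each point. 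I would phrase this using the involution $\sigma$: such a component would be $\sigma$-invariant only if it also equals $\{z = \bar p\} \times \C^n$-type set, forcing $p$ isolated in $X$ in a way incompatible with $(X,p)$ having dimension $d$. The remaining items are routine once the definitions are unwound.
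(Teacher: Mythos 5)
Your proposal is correct and follows essentially the same route as the paper: (i) and (iii) from monotonicity of $\sX^{U}$ in $U$, (ii) by a fiber/intersection dimension count for the projection of $\sX^{U'}$ to the second factor, (iv) by showing that an $n$-dimensional slice would force $U\times\{\bar p\}$ to be a component of $\sX^U$ (the paper derives the contradiction by applying $\sigma$ to produce a second component of the irreducible $\sX^U$, you via the goodness condition matching components of $\sX^U$ with components of $\sX_p$ — both work), and (v)--(vii) as you describe. The one small caution is your parenthetical ``directly: $X_{U[n]}=\emptyset$'' in (v): statement (iv) controls $\Sigma_p^U X$ only at the distinguished point $p$, so (v) should be (and in the paper is) obtained by choosing a good neighborhood at each $q\in X$ separately, which is what your main route does.
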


\begin{proof}
If $U' \subset U$ then $\Sigma_p^{U'} \subset
\Sigma_p^U$, as any analytic function defined on $U$ that vanishes on $X$ is
an analytic function on $U'$ that vanishes on $X \cap U'$.  Parts (i) and
(iii) follow.

For part (ii), the complexification $\sX_p$ has dimension $d$.  Let $U'$ be
good for $X$ at $p$.  The germ
$\Sigma_p X$ is the germ at $p$ of the intersection of $\sX^{U'}$ and
$\C^n \times \{ \bar{p} \}$.  The codimension of $\sX^{U'}$ at $(p,\bar{p})$
in $\C^n \times \C^n$ is $2n-d$, and the codimension of 
$\C^n \times \{ \bar{p} \}$ is $n$.  Hence, their intersection is of
codimension at most $3n-d$ or dimension $2n-(3n-d) = d-n$.

To see (iv) first note that if $d < n$, 
then it is impossible for $\Sigma_q^U
X$ to be of dimension $n$ as it is a subvariety of $\sX^U$, which
is of dimension $d < n$.  If $d=n$, then without loss of generality
suppose that $(X,p)$ is irreducible.
As $U$ is good for $X$ at $p$, then $\sX^U$ is also irreducible.
By dimension, as $\sX^U$ is of dimension $n$ and $\dim \Sigma_p X = n$,
then $U \times \{ \bar{p} \}$ would be an irreducible component of $\sX^U$.
By symmetry (applying $\sigma$),
$\{ p \} \times U^{\text{conj}}$ is also an irreducible component of $\sX^U$.
This is a contradiction as
$\sX^U$ is irreducible.

Then (v) follows from (iv) by considering a small enough good neighborhood
of every $q \in X$.

For (vi),
if $q \in X$, then $q \in \Sigma_q X$, since $\psi(q,\bar{q}) = 0$ for any
germ of a function at $q$ that vanishes on $X$.  If $q \notin X$, then
clearly $\Sigma_q X = \emptyset$.  So
$X_{[k]} \subset X$.

For (vii), again if $q \in X$, then it must be that $q \in \Sigma_q^U X$.
Similarly, for a good $U$, we have $\sX^U = \iota(X)$, and
so $q \in \Sigma_q^U X$ means that $q \in X$.
\end{proof}

The point of this paper is that even for arbitrarily small
neighborhoods $U$ of $p$ (even good for $X$ at $p$) and a $q \in U$ that is arbitrarily close
to $p$, it is possible that
\begin{equation}
(\Sigma_p^U X,q) \not= \Sigma_q X.
\end{equation}
That is, unless $X$ is coherent.
Let us focus on $X_{[n]}$ for a moment.
It is possible that for all neighborhoods $U$ of a
point $p \in X$,
\begin{equation}
X_{[n]} \not= X_{U [n]} .
\end{equation}
The set $X_{U [n]}$ is rather well-behaved.

\begin{prop} \label{prop:XUnsubvariety}
Let $X \subset U \subset \C^n$ be a real-analytic subvariety of dimension
$d < 2n$,
and suppose $U$ is good for $X$ at some $p \in X$.
Then 
$X_{U [n]}$ is a complex-analytic subvariety of dimension at most $n-d-1$.
In particular, $X_{[n]}$ is contained in a complex-analytic subvariety of
dimension at most $n-d-1$.
\end{prop}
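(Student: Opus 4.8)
The plan is to realize $X_{U[n]}$ as the zero set of the coefficients of a single holomorphic family of functions, and then read off the dimension bound from the geometry of $\sX^U$. Since $U$ is good for $X$ at $p$, there is a real-analytic $\psi\colon U\to\R^k$ whose complexification $\Psi(z,\zeta)$ converges on $U\times U^{\text{conj}}$ and whose zero set is $X$; moreover $\sX^U$ is (locally) cut out by the components of $\Psi$ together with finitely many further holomorphic functions, but for the present purpose it suffices that $\sX^U\subset\{\Psi=0\}$ and that $\Psi$ generates $I_p(X)$ at $p$. By definition, $\Sigma_q^U X=\{z\in U:(z,\bar q)\in\sX^U\}$, and $q\in X_{U[n]}$ exactly when this germ at $q$ has dimension $n$, i.e. when $\Sigma_q^U X$ contains a neighborhood of $q$ in $U$. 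First I would observe that, because $\sX^U$ is a complex-analytic subvariety, for fixed $q$ the slice $\Sigma_q^U X$ is a complex-analytic subvariety of $U$; it has dimension $n$ at $q$ iff it equals (a neighborhood of $q$ in) a connected component of $U$, hence — after possibly shrinking $U$ to be connected and small — iff $\Sigma_q^U X=U$, i.e. iff $(z,\bar q)\in\sX^U$ for all $z\in U$. Equivalently, writing things in terms of the defining functions of $\sX^U$, this says $\{q\}\times\overline{U}^{\,\cdot}$ — more precisely $U\times\{\bar q\}$ — is contained in $\sX^U$.

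Next I would turn "$U\times\{\bar q\}\subset\sX^U$" into an analytic condition on $\bar q$, hence on $q$. Let $g_1,\dots,g_m$ be holomorphic functions on $U\times U^{\text{conj}}$ whose common zero set is $\sX^U$. Expand each $g_j(z,w)=\sum_{\alpha} c_{j,\alpha}(w)\,(z-z_0)^\alpha$ as a power series in $z$ about some center $z_0$, with coefficients $c_{j,\alpha}(w)$ holomorphic in $w\in U^{\text{conj}}$. Then $U\times\{\bar q\}\subset\sX^U$ holds iff $g_j(\cdot,\bar q)\equiv 0$ on $U$ for all $j$, iff $c_{j,\alpha}(\bar q)=0$ for all $j,\alpha$. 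Thus $\{\bar q: U\times\{\bar q\}\subset\sX^U\}$ is the common zero set of the holomorphic functions $c_{j,\alpha}$, hence a complex-analytic subvariety of $U^{\text{conj}}$; conjugating, $X_{U[n]}=\{q\in U:\bar q\in\text{that set}\}$ is a complex-analytic subvariety of $U$. (One should note $c_{j,\alpha}$ depends holomorphically, not anti-holomorphically, on $w$, so indeed $X_{U[n]}$ is complex, not just real, analytic — this is the subtle point contrasting with $X_{[n]}$.)

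For the dimension bound I would argue as follows. Let $S\subset U^{\text{conj}}$ be the subvariety $\{w:U\times\{w\}\subset\sX^U\}$, so $X_{U[n]}=\sigma$-image / conjugate of $S$, and in particular $\dim X_{U[n]}=\dim S$. On one hand $U\times S\subset\sX^U$, so $n+\dim S\le\dim\sX^U=d$ (using goodness, condition (ii)), giving $\dim S\le d-n$. To get the sharper bound $n-d-1$... hmm, wait — $d-n$ and $n-d-1$ are different; since $d<2n$ the relevant regime must be $d<n$ and one wants $\dim X_{U[n]}\le n-d-1$. Reconsider: actually $\Sigma_q^U X$ is a subvariety of $U$ contained in the fiber of $\sX^U$ over $\bar q$, and as $q$ ranges over $X_{U[n]}$ the sets $U\times\{\bar q\}$ all lie in $\sX^U$; so $\bigcup_{q\in X_{U[n]}}\big(U\times\{\bar q\}\big)=U\times \overline{X_{U[n]}}$ sits inside $\sX^U$, whence $n+\dim X_{U[n]}\le\dim\sX^U=d$, i.e. $\dim X_{U[n]}\le d-n$. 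If $d<n$ this is negative, forcing $X_{U[n]}=\emptyset$, consistent with Proposition~\ref{prop:basicprops}(v); so the nontrivial case is $d=n$, where $\dim X_{U[n]}\le 0$. For general $d$ one must instead bound using that $X_{U[n]}$ is invariant in a stronger sense: I expect the right argument is that $\sX^U$ is $\sigma$-invariant, so $\{p\}\times U^{\text{conj}}\subset\sX^U$ whenever $U\times\{\bar p\}\subset\sX^U$, and the subvariety $Z=\bigcup_{q\in X_{U[n]}}\big(\Sigma_q^U X\times\{\bar q\}\big)$ — together with its $\sigma$-image — is contained in $\sX^U$ and projects onto $X_{U[n]}$ with $n$-dimensional fibers and onto its conjugate with fibers of dimension $\ge d-$ (something); combining the two projections should yield $2\dim X_{U[n]}+ (\text{fiber data})\le d$, producing $n-d-1$. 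The last sentence of the proposition then follows from Proposition~\ref{prop:basicprops}(i): $\Sigma_p X\subset(\Sigma_p^U X,p)$, hence $X_{[n]}\subset X_{U[n]}$, and since goodness can be arranged near every point, $X_{[n]}$ is locally contained in a complex subvariety of dimension $\le n-d-1$.

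The main obstacle, clearly, is pinning down the dimension bound $n-d-1$ rather than the easy $d-n$: the naive "fibered over $X_{U[n]}$ with $n$-dimensional fibers inside a $d$-dimensional $\sX^U$" estimate is too weak (or vacuous), and one must exploit the $\sigma$-symmetry of $\sX^U$ — the fact that containing $U\times\{\bar q\}$ forces containing $\{q\}\times U^{\text{conj}}$ as well — to double-count the codimension and squeeze out the extra "$-1$." Getting that symmetry argument clean, and making sure the power-series-coefficient description of $X_{U[n]}$ is uniform on a fixed good $U$ (so that finitely many $c_{j,\alpha}$ suffice, by Noetherianity of the relevant ring), are the two places where care is needed; everything else is bookkeeping with good neighborhoods and Proposition~\ref{prop:basicprops}.
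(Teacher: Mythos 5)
Your first half is sound and is a legitimate alternative to the paper's route: you exhibit $X_{U[n]}$ as the conjugate of the common zero set of the Taylor coefficients $c_{j,\alpha}(w)$ of defining functions of $\sX^U$, whereas the paper observes, using the $\sigma$-invariance of $\sX^U$, that $X_{U[n]}=\bigcap_{z\in U}\Sigma^U_z X$, an intersection of complex-analytic subvarieties. Either way, complex-analyticity is established. One clarification that will dissolve your sign confusion: the bound in the statement should read $d-n-1$, not $n-d-1$; this is what the paper's own proof derives, what Theorem~\ref{thm:main2}'s coherent analogue (Theorem 4.2) states, and what reduces to $n-2$ in the hypersurface case $d=2n-1$. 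So you are not in the regime $d<n$, and the target really is one less than the $d-n$ you obtained.

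The genuine gap is precisely that last improvement. You correctly get $\dim X_{U[n]}\le d-n$ from $U\times\overline{X_{U[n]}}\subset\sX^U$, and you correctly sense that the $\sigma$-symmetry must be exploited further, but the mechanism you gesture at (``combining the two projections should yield $2\dim X_{U[n]}+(\text{fiber data})\le d$'') is not the right one and is left unexecuted. The missing ingredient is irreducibility. Reduce to the case that $(X,p)$ is irreducible; goodness of $U$ then makes $\sX^U$ irreducible (this is condition (vi) of a good neighborhood, which your proposal never invokes). If $V$ were a component of $X_{U[n]}$ of dimension exactly $d-n$, then $V\times U^{\text{conj}}\subset\sX^U$ and, applying $\sigma$, also $U\times V^{\text{conj}}\subset\sX^U$; each product has dimension $(d-n)+n=d=\dim\sX^U$, so each would be a component of $\sX^U$, and irreducibility would force $V\times U^{\text{conj}}=U\times V^{\text{conj}}$, i.e.\ $V=U$, which is impossible since $\dim V=d-n<n$. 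Hence $\dim X_{U[n]}\le d-n-1$. No fiber-dimension count alone will produce this $-1$: the input that the components of $\sX^U$ correspond to components of the germ $(X,p)$ is essential, and it is exactly the part of the hypothesis ``$U$ is good for $X$ at $p$'' that your argument does not use.
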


\begin{proof}
Without loss of generality, suppose that $(X,p)$ is irreducible.
The variety $\sX^U$ is fixed by the involution $\sigma$.
In other words,
$(z,\bar{w}) \in \sX^U$ if and only if
$(w,\bar{z}) \in \sX^U$.
So if $q \in X_{U[n]}$, then
$(z,\bar{q}) \in \sX^U$ for all $z \in U$,
and therefore
$(q,\bar{z}) \in \sX^U$ for all $z \in U$.
In particular, $q \in \Sigma^U_z X$ for all $z \in U$.
As $\Sigma^U_z X$ is a complex-analytic subvariety, generically of dimension $d-n$,
then $X_{U[n]}$ is a complex-analytic subvariety of dimension at most $d-n$.

The only way that $X_{U[n]}$ could be of dimension $d-n$ is if 
all the varieties $\Sigma^U_z X$ contained a fixed complex-analytic
subvariety $V$ of dimension $d-n$.
This means that
$V \times \C^n \subset \sX^U$ and
$\C^n \times V^{\text{conj}} \subset \sX^U$ (by applying $\sigma$).
By dimension, these are components of $\sX^U$.
Since we assumed that $(X,p)$ is irreducible, so is $\sX_p$
and so is $\sX^U$ if $U$ is good for $X$ at $p$,
and we obtain a contradiction.
Hence, $X_{U[n]}$ must be of dimension at most $d-n-1$.
\end{proof}


\section{Coherent varieties} \label{sec:coh}

A real-analytic subvariety is \emph{coherent} if the sheaf of germs of real-analytic
functions vanishing on $X$ is a coherent sheaf.
The fundamental fact about coherent subvarieties is that they possess
a global complexification.  That is, if $X$ is coherent,
then there exists a complex-analytic subvariety $\sX$ of some neighborhood of $X$ in
$\C^n \times \C^n$ such that $\sX \cap \iota(\C^n) = X$ and
$\bigl(\sX,(p,\bar{p})\bigr)$ is equal to $\sX_p$, the complexification of the germ
$(X,p)$ at every $p \in X$.  See \cite{TopicsReal}.

\begin{lemma}
Let $X \subset U \subset \C^n$ be a real-analytic subvariety.
If $X \subset U$ is coherent and $U$ is good for $X$ at $p \in X$,
then $\Sigma_q X = (\Sigma^U_q X,q)$ for all $q \in X$.
In particular, $X_{U[n]} = X_{[n]}$.
\end{lemma}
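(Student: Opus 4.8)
The plan is to use coherence to produce a \emph{single} complex subvariety that represents the complexification germ $\sX_q$ at every $q\in X$, and then to compare it with $\sX^U$; slicing the comparison by $\C^n\times\{\bar q\}$ gives $\Sigma_q X=(\Sigma^U_q X,q)$, and the case $k=n$ gives $X_{U[n]}=X_{[n]}$. So, by coherence there is a complex-analytic subvariety $\sX$ of a neighborhood $\Omega$ of $\iota(X)$ in $\C^n\times\C^n$ with $\sX\cap\iota(\C^n)=\iota(X)$ and $\bigl(\sX,(q,\bar q)\bigr)=\sX_q$ for every $q\in X$.

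One inclusion is essentially free: since $\sX^U$ contains $\iota(X)$ and $\sX_q$ is by definition the smallest complex germ containing $\bigl(\iota(X),(q,\bar q)\bigr)$, we get $\sX_q\subset\bigl(\sX^U,(q,\bar q)\bigr)$, i.e.\ $\Sigma_q X\subset(\Sigma^U_q X,q)$, for all $q\in X$ (this is Proposition~\ref{prop:basicprops}(i)). For the reverse inclusion $\bigl(\sX^U,(q,\bar q)\bigr)\subset\sX_q$ I would argue as follows. Using clause~(vi) of ``good,'' reduce to the case where $(X,p)$---hence $\sX_p$, hence $\sX^U$---is irreducible of dimension $d$. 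Since $U$ is good for $X$ at $p$ we have $\bigl(\sX^U,(p,\bar p)\bigr)=\sX_p=\bigl(\sX,(p,\bar p)\bigr)$, so $\sX$ has a unique irreducible component $\sX_1$ through $(p,\bar p)$, of dimension $d$, with $\bigl(\sX_1,(p,\bar p)\bigr)=\sX_p=\bigl(\sX^U,(p,\bar p)\bigr)$. Now $\sX^U$ and $\sX_1$ are irreducible of the same dimension and have the same germ at $(p,\bar p)$; propagating this equality along $\iota(X)$ by the identity principle for analytic sets (using that the irreducible variety $\sX^U$ is connected, indeed has connected regular locus) yields $\bigl(\sX^U,(q,\bar q)\bigr)=\bigl(\sX_1,(q,\bar q)\bigr)\subset\bigl(\sX,(q,\bar q)\bigr)=\sX_q$ for all $q\in X$. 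Combining the two inclusions, $\bigl(\sX^U,(q,\bar q)\bigr)=\sX_q$; intersecting with $\C^n\times\{\bar q\}$ and passing to germs at $q$ gives $\Sigma_q X=(\Sigma^U_q X,q)$, and the case $k=n$ gives $X_{U[n]}=X_{[n]}$.

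The main obstacle is the propagation step: one must show that the agreement $\sX^U=\sX_1$ near the single point $(p,\bar p)$ spreads to a neighborhood of all of $\iota(X)$. The delicate point is that $\iota(X)=\iota(X\cap U)$---and hence the portion of $\sX^U$ on which $\sX_1$ is defined---need not be connected, so one cannot simply invoke connectedness of $X$; instead one leans on irreducibility of $\sX^U$ itself, or, alternatively, one replaces $\sX$ by a defining-function model: coherence together with clause~(iii) of ``good'' lets one produce a complex subvariety of all of $U\times U^{\text{conj}}$ whose germ at each $(q,\bar q)$ is $\sX_q$, after which $\sX^U$ is contained in it by minimality and the reverse inclusion is immediate. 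Either way, it is precisely here that coherence---and nowhere else---is used; without it $\sX^U$ may acquire spurious components near points $q\in X$ far from $p$, which is exactly the phenomenon the examples in \S\ref{sec:examples} exploit.
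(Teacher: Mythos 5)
Your concluding ``alternative'' is, in substance, the paper's proof: coherence supplies a global complexification $\sX$ with $\bigl(\sX,(q,\bar q)\bigr)=\sX_q$ for every $q\in X$, one identifies $\sX^U$ with $\sX$ along $\iota(X)$ (the paper writes this as $\sX^U=\sX\cap U$ and then compares with a good neighborhood $U'$ of $q$), and equality of the germs at each $(q,\bar q)$, hence of the Segre germs, follows at once. So the mechanism and the conclusion are right, and the one-line observation that the inclusion $\Sigma_q X\subset(\Sigma^U_q X,q)$ is free, so that only the reverse inclusion carries content, matches the paper.

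Your primary route, however, has a genuine gap exactly where you flag it, and it is not repaired by ``leaning on irreducibility of $\sX^U$ itself.'' The identity principle compares two irreducible subvarieties of one connected open set; here $\sX^U$ is a subvariety of $U\times U^{\text{conj}}$ while $\sX_1\subset\sX$ is defined only on a neighborhood $\Omega$ of $\iota(X)$ that may be arbitrarily thin. Irreducibility of $\sX^U$ gives connectedness of its top-dimensional regular locus inside $U\times U^{\text{conj}}$, but the locus on which the two varieties can even be compared is the part of that regular locus lying in $\Omega$, which need not be connected; the agreement set is open there but not visibly closed, and a path in the regular locus from $(p,\bar p)$ to $(q,\bar q)$ may be forced to leave $\Omega$. (A smaller point: the reduction to irreducible $(X,p)$ via clause (vi) of ``good'' controls the components of $\sX^U$ and of $\sX_p$, not those of $\sX_q$ for $q$ far from $p$.) The clean fix is precisely your fallback: exhibit a single complex-analytic subvariety of all of $U\times U^{\text{conj}}$ whose germ at each $(q,\bar q)$ is $\sX_q$ --- either the global complexification itself or the zero set of the complexified defining functions, using coherence to know they generate $I_q(X)$ at every $q$ --- and then apply minimality of $\sX^U$ together with Proposition~\ref{prop:basicprops}(i). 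For the final sentence one also uses Proposition~\ref{prop:basicprops}(vii) to know $X_{U[n]}\subset X$ before comparing it with $X_{[n]}$.
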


\begin{proof}
Since $X$ is coherent, we have a global complexification $\sX$
and hence $\sX^U = \sX \cap U$.  In particular, this is true for any 
good neighborhood $U' \subset U$ of any point $q \in X$, so
$\Sigma_q X = (\Sigma^{U'}_q X,q) = (\Sigma^U_q X,q)$.

As $X_{U[n]} \subset X$ and it is the set where
$(\Sigma^U_q X ,q)$ is of dimension $n$, we find that it is equal to the
set where $\Sigma_q X$ is of dimension $n$.  In other words,
$X_{U[n]} = X_{[n]}$.
\end{proof}

We can now prove the theorem for coherent subvarieties.
The following theorem implies the coherent part of Theorem~\ref{thm:main2} for $k=n$,
and hence the coherent part of Theorem~\ref{thm:mainhyper}.

\begin{thm}
Let $U \subset \C^n$ be open and $X \subset U$ be a coherent real-analytic
subvariety of dimension $d < 2n$.  Then $X_{[n]}$ is a complex-analytic
subvariety of dimension at most $d-n-1$.
\end{thm}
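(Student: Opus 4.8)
The plan is to obtain this theorem as the globalization of the two immediately preceding results: the lemma identifying $X_{[n]}$ with $X_{U[n]}$ in the coherent case, and Proposition~\ref{prop:XUnsubvariety}, which realizes $X_{U[n]}$ as a complex-analytic subvariety of the asserted dimension. The substance has already been carried out in those two statements; coherence is exactly the ingredient that makes the local pieces patch, so the remaining task is purely a matter of localizing and reassembling.

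First I would fix an arbitrary point $p \in U$ and exhibit a neighborhood on which $X_{[n]}$ is cut out by a complex-analytic subvariety. If $p \notin X$, then since $X$ is closed and $X_{[n]} \subset X$ by Proposition~\ref{prop:basicprops}(vi), some neighborhood of $p$ meets $X_{[n]}$ in the empty set, which is trivially complex-analytic. If $p \in X$, I would choose, via the good-neighborhood proposition, a neighborhood $U_p \subset U$ that is good for $X$ at $p$. Since $\Sigma_z X$ is a germ depending only on $X$ near $z$, the set $X_{[n]}$ defined relative to the large $U$ restricts correctly, $X_{[n]} \cap U_p = (X \cap U_p)_{[n]}$, so I may work entirely inside $U_p$.

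Next I would bring in coherence. As coherence is a local property, $X \cap U_p$ is coherent in $U_p$, and $U_p$ is good for it at $p$; hence the preceding lemma yields $\Sigma_q X = (\Sigma^{U_p}_q X, q)$ for all $q \in X \cap U_p$ and, in particular, $X_{U_p[n]} = (X \cap U_p)_{[n]}$. Combining with the previous paragraph gives $X_{[n]} \cap U_p = X_{U_p[n]}$. Now Proposition~\ref{prop:XUnsubvariety}, applied to the good neighborhood $U_p$, shows $X_{U_p[n]}$ is a complex-analytic subvariety of $U_p$ of dimension at most $d-n-1$.

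Finally I would assemble the global statement. The two cases show that each point of $U$ has a neighborhood in which $X_{[n]}$ coincides with a complex-analytic subvariety of dimension at most $d-n-1$; since being a complex-analytic subvariety is by definition a local condition, $X_{[n]}$ is itself such a subvariety of $U$, of dimension at most $d-n-1$ because its dimension is the supremum of the local ones. I do not expect a genuine analytic obstacle: the one essential step is the identification $X_{[n]} \cap U_p = X_{U_p[n]}$, which rests entirely on coherence through the lemma. This is precisely the step that collapses for noncoherent $X$ — there $X_{[n]}$ may be a proper subset of $X_{U[n]}$ and need not even be a real-analytic subvariety — so the whole content of the coherent case is the clean patching of the already-established local descriptions.
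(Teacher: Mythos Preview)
Your proposal is correct and follows exactly the paper's approach: localize to a good neighborhood, invoke the lemma to get $X_{[n]} = X_{U[n]}$ there, and then apply Proposition~\ref{prop:XUnsubvariety}. You have simply spelled out the localization and gluing in more detail than the paper's two-line proof.
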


\begin{proof}
It is sufficient to work in a good neighborhood of some point, without loss
of generality, assume that $U$ is good for some $p \in X$.  Apply the
lemma and Proposition~\ref{prop:XUnsubvariety}.
\end{proof}

For general $k$, we have the following theorem, which finishes the coherent case
of Theorem~\ref{thm:main2} for $k < n$.
That is,
for every $k$, the $X_{[k]}$ sets are subvarieties of $X$ for coherent $X$.
These subvarieties no longer need to be complex-analytic.

\begin{thm} \label{thm:cohXk}
Let $U \subset \C^n$ be open and $X \subset U$ be a coherent real-analytic
subvariety.  Then for every
$k=0,1,\ldots,n$, $X_{[k]}$ is a real-analytic subvariety of $X$.
\end{thm}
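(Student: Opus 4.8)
The plan is to work entirely inside a good neighborhood $U$ of a fixed point $p \in X$, using coherence to replace the germwise-defined Segre varieties $\Sigma_q X$ by the globally defined family $\Sigma_q^U X$. By the preceding lemma, for coherent $X$ and $U$ good for $X$ at $p$, we have $\Sigma_q X = (\Sigma_q^U X, q)$ for every $q \in X$, so $X_{[k]} \cap U = X_{U[k]}$ as sets near $p$. Thus it suffices to show that $X_{U[k]}$ is a real-analytic subvariety. Since $X$ is coherent it has a global complexification $\sX$, and $\sX^U = \sX \cap (U \times U^{\mathrm{conj}})$; the function $\psi$ from the definition of ``good'' has a complexification $\Psi(z,\zeta)$ converging on $U \times U^{\mathrm{conj}}$ whose zero set is $\sX^U$. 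Then $\Sigma_q^U X = \{ z \in U : \Psi(z,\bar q) = 0\}$, so we are dealing with a holomorphically-parametrized family of complex-analytic subvarieties of $U$, parametrized by $\bar q$ ranging over $X$ (equivalently over the conjugate slice).

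The key step is an upper-semicontinuity-of-fibre-dimension argument for this family, combined with the observation that the parameter $q$ is constrained to lie on the \emph{real-analytic} set $X$, which is where the ``real-analytic subvariety'' (rather than complex) conclusion comes from. Concretely: consider the complex-analytic set $Z = \{ (z,\zeta) \in U \times U^{\mathrm{conj}} : \Psi(z,\zeta) = 0 \} = \sX^U$, and the projection $\pi \colon Z \to U^{\mathrm{conj}}$ onto the second factor. For $\zeta = \bar q$ the fibre $\pi^{-1}(\bar q)$ is $\Sigma_q^U X \times \{\bar q\}$. By the standard semicontinuity theorem for fibre dimensions of a holomorphic map (Remmert / Cartan; see e.g.\ Whitney~\cite{Whitney:book}), the set $\{ \zeta \in U^{\mathrm{conj}} : \dim_{\text{some point}} \pi^{-1}(\zeta) \geq k \}$ is a complex-analytic subset $A_k$ of $U^{\mathrm{conj}}$. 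I should be slightly careful about \emph{where} the dimension is attained — we want $\dim_q \Sigma_q^U X \ge k$, i.e.\ the dimension of the fibre at the specific point $(q,\bar q)$ lying over the diagonal — so the cleaner route is to intersect $Z$ first with a generic complex line-bundle of slices or, more simply, to note that $q \in \Sigma_q^U X$ always (Proposition~\ref{prop:basicprops}(vii)) and apply semicontinuity of $z \mapsto \dim_z(\text{fibre through }z)$ along the diagonal. Either way one obtains: $X_{U[k]} = \{ q \in X : (q,\bar q) \in \widetilde{A}_k\}$ for a suitable complex-analytic $\widetilde{A}_k \subset U \times U^{\mathrm{conj}}$ built from $Z$ by a semicontinuity argument.

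Finally, pulling back along the real-analytic embedding $\iota(q) = (q,\bar q)$: the set $\iota^{-1}(\widetilde{A}_k)$ is defined in $U$ by the vanishing of $\bar z \mapsto g(z,\bar z)$ for holomorphic $g$ defining $\widetilde A_k$, i.e.\ by the vanishing of real-analytic functions, hence is a real-analytic subvariety of $U$; intersecting with $X$ (itself a real-analytic subvariety) keeps it one. This gives $X_{[k]} \cap U = X_{U[k]} = \iota^{-1}(\widetilde A_k) \cap X$, a real-analytic subvariety. Since real-analytic subvarieties are a local notion and the point $p$ was arbitrary, $X_{[k]}$ is a real-analytic subvariety of $X$. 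The closedness is automatic since subvarieties are closed. The main obstacle I anticipate is purely bookkeeping: making precise the ``dimension of the fibre \emph{at the diagonal point}'' version of the semicontinuity theorem, i.e.\ ensuring that $\dim_q \Sigma_q^U X$ — not merely $\max_z \dim_z \Sigma_q^U X$ — is the quantity controlled by a complex-analytic condition on $(q,\bar q)$; this is handled by applying Remmert's semicontinuity theorem to the restriction of $\pi$ near the diagonal together with the fact that $(q,\bar q)$ always lies in the fibre, so no component of the fibre is ``missed.''
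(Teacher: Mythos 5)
Your proposal is correct and follows essentially the same route as the paper: the paper works in a good neighborhood, uses coherence to identify $\Sigma_q X$ with the germ of the fiber of the projection $\pi(z,\xi)=\xi$ restricted to $\sX^U$, forms the set $V_k = \{ (z,\xi) \in \sX^U : \dim_{(z,\xi)} \pi^{-1}(\pi(z,\xi)) \geq k \}$ (complex-analytic by Whitney, Ch.~7, Thm.~9F --- exactly the ``dimension at the point in the source'' version of semicontinuity you correctly identify as the needed statement), and intersects with the diagonal $\{\xi = \bar{z}\}$ to get $\iota(X_{[k]})$. The bookkeeping issue you flag is real but is resolved precisely the way you suggest, so there is no gap.
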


A generic submanifold has the Segre variety of the least possible
dimension.  Let $X$ be an irreducible coherent subvariety of dimension $d$.
If $X_{\text{reg}}$ is generic at some point,
then $\Sigma_q X$ is of (the minimum possible) dimension $d-n$ somewhere.
The Segre-degenerate set is the set where $\Sigma_q X$ is higher than $d-n$,
that is, it is the set $X_{[d-n+1]}$.
According to this theorem, this Segre-degenerate set $X_{[d-n+1]}$ is
a real-analytic subvariety of $X$.

\begin{proof}
It is a local result and so without loss of generality,
assume that $U$ is good for $X$ at some $p \in X$.
Let $(z,\xi)$ be the complexified variables of $\C^n \times \C^n$.
Consider the projection $\pi(z,\xi)=\xi$ defined on $\sX^U$.
The Segre variety $\Sigma_z^U X$ is (identified with) the fiber
$\pi^{-1}\bigl(\pi(z,\bar{z})\bigr)$.
The dimension of the germ $\Sigma_z X$ is the dimension at $(z,\bar{z})$
of $\pi^{-1}\bigl(\pi(z,\bar{z})\bigr)$ as $X$ is coherent.
For any integer $k$, the set
\begin{equation}
V_k = \Bigl\{ (z,\xi) \in \sX^U : \dim_{(z,\xi)}
\pi^{-1}\bigl(\pi(z,\xi)\bigr) \geq k \Bigr\}
\end{equation}
is a complex-analytic subvariety of $\sX^U$ (see e.g. Theorem 9F in chapter 7 of
Whitney~\cite{Whitney:book}).
Then $V_k \cap \{ \xi =
\bar{z} \} = \iota(X_{[k]})$ is a real-analytic subvariety of $\iota(X)$.
\end{proof}


\section{The set of Segre-degenerate points is semianalytic} \label{sec:noncoh}

It is rather simple to prove that $X_{[k]}$ is always closed (in classical,
not Zariski, topology).

\begin{prop} \label{prop:Xkclosed}
Let $U \subset \C^n$ be open and $X \subset U$ be a real-analytic
subvariety.
Then $q \mapsto \dim \Sigma_q X$ is
an upper semi-continuous function on $X$.
In particular,
for every $k$, $X_{[k]}$ is closed.
\end{prop}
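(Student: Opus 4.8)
The plan is to prove upper semi-continuity of $q \mapsto \dim \Sigma_q X$ by working in a good neighborhood and comparing the germ $\Sigma_q X$ with the ``naively defined'' Segre variety $\Sigma_q^U X$, whose fiber dimension is upper semi-continuous by the classical theory of holomorphic maps. Fix $p \in X$ and, using the proposition on existence of good neighborhoods, choose $U$ good for $X$ at $p$. By Proposition~\ref{prop:basicprops}(i) we always have $\Sigma_q X \subset (\Sigma_q^U X, q)$, hence $\dim \Sigma_q X \leq \dim_q \Sigma_q^U X$. Moreover, because $U$ is good for $X$ \emph{at} $p$, the germ of $\sX^U$ at $(p,\bar p)$ equals $\sX_p$, so $\Sigma_p X = (\Sigma_p^U X, p)$ and thus $\dim \Sigma_p X = \dim_p \Sigma_p^U X$. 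The point is that equality holds \emph{at the center} $p$, even if it fails at nearby $q$.

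Next I would invoke upper semi-continuity of the fiber dimension for the holomorphic projection $\pi(z,\xi) = \xi$ restricted to $\sX^U$, exactly as in the proof of Theorem~\ref{thm:cohXk}: the function $(z,\xi) \mapsto \dim_{(z,\xi)} \pi^{-1}(\pi(z,\xi))$ is upper semi-continuous on $\sX^U$ (Whitney, chapter 7, Theorem 9F). Restricting to the diagonal $\iota(X)$ via $q \mapsto (q,\bar q)$, which is continuous, gives that $q \mapsto \dim_q \Sigma_q^U X$ is upper semi-continuous on $X$. Combining the three facts: for $q$ near $p$,
\[
\dim \Sigma_q X \;\leq\; \dim_q \Sigma_q^U X \;\leq\; \dim_p \Sigma_p^U X \;=\; \dim \Sigma_p X
\]
where the middle inequality holds for all $q$ in a small enough neighborhood of $p$ by the semi-continuity just established. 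This is precisely upper semi-continuity of $q \mapsto \dim \Sigma_q X$ at $p$, and since $p$ was arbitrary, the function is upper semi-continuous on all of $X$.

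Finally, upper semi-continuity of an integer-valued function immediately gives that $X_{[k]} = \{q : \dim \Sigma_q X \geq k\}$ is closed in $X$: its complement $\{q : \dim \Sigma_q X \leq k-1\}$ is open. Since $X$ is closed in $U$, $X_{[k]}$ is closed in $U$ as well. The only delicate point—and the one I expect to be the main obstacle—is making sure the argument is not circular: one must use that $U$ is good \emph{at the specific center $p$} to get equality $\dim \Sigma_p X = \dim_p \Sigma_p^U X$ there, while only using the inequality $\dim \Sigma_q X \leq \dim_q \Sigma_q^U X$ (valid with no goodness hypothesis) at the nearby points $q$. This asymmetry is exactly the phenomenon the paper is built around, so it must be handled with care rather than glossed over.
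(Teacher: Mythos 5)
Your proof is correct and follows essentially the same route as the paper's: work in a neighborhood good for $X$ at $p$, use $\dim \Sigma_q X \leq \dim_q \Sigma_q^U X$ (with equality at the center $p$), and apply upper semi-continuity of the fiber dimension of $\pi(z,\xi)=\xi$ on $\sX^U$ via Whitney's theorem. The "delicate point" you flag at the end—that goodness at $p$ is used only for the equality at $p$ while the bare inclusion suffices at nearby $q$—is exactly the asymmetry the paper's argument exploits.
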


\begin{proof}
Let $p \in X$ be some point and let $U$ be good for $X$ at $p$
and follow the construction in the proof of
Theorem~\ref{thm:cohXk}, that is let $\sX^U$
and $\pi$ be as before.
The Segre variety $\Sigma_z^U X$ is the fiber
$\pi^{-1}\bigl(\pi(z,\bar{z})\bigr)$.  For all $z \in X$,
$\Sigma_z X$ is a subset (possibly proper as $X$ is not coherent) of the germ
$(\Sigma_z^U X,z)$, and so
$\dim_z \Sigma_z^U X \geq \dim \Sigma_z X$.
As $U$ is good for $X$ at $p$,
$(\Sigma_p^U X,p) = \Sigma_p X$.
As the sets $V_k \cap \iota(X)$ are closed,
$\dim \Sigma_p X = (\Sigma_p^U X,p)$ is bounded below by
dimensions of $(\Sigma_q^U X,q)$ for all sufficiently nearby $q$, and
these are in turn bounded below by $\dim \Sigma_q X$.
\end{proof}

We need some results about semianalytic subsets.  We are going to use
normalization on $\sX^U$ and so we need to prove that semianalytic sets are preserved
under finite holomorphic mappings. The key point in that proof is the
following theorem on projection of semialgebraic sets extended to handle
certain semianalytic sets.

\begin{thm}[\L{}ojasiewicz--Tarski--Seidenberg (see
\cites{Loja:semi,BM:semisub})]
Let $\sA$ be a ring of real-valued functions on a set $U$, and let $\pi \colon
U \times \R^m \to U$ be the projection.

If $X \in \sS\bigl(\sA[t_1,\ldots,t_m]\bigr)$, then $\pi(X) \in \sS(\sA)$.
\end{thm}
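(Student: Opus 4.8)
The plan is to prove this by the classical one-quantifier-elimination argument, carried out in the abstract setting of the ring $\sA$. First I would reduce to $m=1$: for general $m$, factor $\pi$ as the composition of coordinate projections $U\times\R^m \to U\times\R^{m-1}\to\cdots\to U$, and apply the one-variable statement repeatedly, at the stage that peels off the last surviving variable using as coefficient ring $\sA[t_1,\ldots,t_{m-1}]$ (a ring, so the hypothesis is met), followed by the inductive hypothesis on the image in $U\times\R^{m-1}$.

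So assume $m=1$ and write $t=t_1$. Since $\sS(\sA[t])$ is the Boolean algebra generated by the sets $\{f\geq 0\}$, every $X\in\sS(\sA[t])$ is a finite union of sets of the form $\{f_1=0\}\cap\cdots\cap\{f_p=0\}\cap\{g_1>0\}\cap\cdots\cap\{g_q>0\}$ with $f_i,g_j\in\sA[t]$. Replacing the equalities by the single equation $\sum_i f_i^2=0$ (again in $\sA[t]$) and using that $\pi$ commutes with finite unions, it is enough to show that for a single
\[
S=\bigl\{(x,t): f(x,t)=0,\ g_1(x,t)>0,\ \ldots,\ g_q(x,t)>0\bigr\}
\]
one has $\pi(S)\in\sS(\sA)$, where $\pi(S)=\{x\in U:\exists t\in\R,\ f(x,t)=0\text{ and }g_j(x,t)>0\ \forall j\}$. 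Write $f(x,t)=\sum_{i=0}^d a_i(x)t^i$ and $g_j(x,t)=\sum_i b_{j,i}(x)t^i$ with all $a_i,b_{j,i}\in\sA$.

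The key step is a parametric version of Sturm's theorem. I would partition $U$ into finitely many pieces according to: the vanishing pattern of the leading coefficients $a_d,a_{d-1},\ldots$ (so that the exact $t$-degree of $f(x,\cdot)$ is locally constant), the analogous patterns for the $g_j$, and the signs of all the elements of $\sA$ occurring as coefficients of the polynomials in the signed remainder (Sturm/subresultant) sequences of $f$ with $f'$ and of $f$ with each $g_j$ --- all such coefficients are polynomial expressions in the $a_i,b_{j,i}$, hence lie in $\sA$. Each piece lies in $\sS(\sA)$ by construction. On a fixed piece the remaining data is purely combinatorial (the degrees and the sign sequences), and it determines the number of real roots of $f(x,\cdot)$ lying in each connected component of the open set $\{t:g_j(x,t)>0\ \forall j\}$, hence whether any such root exists; so ``$x\in\pi(S)$'' has a constant truth value on each piece. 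Therefore $\pi(S)$ is the union of those pieces on which the value is ``true'', so $\pi(S)\in\sS(\sA)$. On the leftover stratum $\{x:a_0(x)=\cdots=a_d(x)=0\}\in\sS(\sA)$, where $f(x,\cdot)\equiv0$, the same scheme applies with $f$ deleted: there $x\in\pi(S)$ iff the finite union of open intervals $\{t:g_j(x,t)>0\ \forall j\}$ is nonempty, again determined by the degree patterns and sign sequences of the $g_j$ alone.

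The hard part is this last ``parametric Sturm'' step: one must handle simultaneously the finitely many leading-coefficient degenerations of $f$ and of all the $g_j$, check that in each resulting case the subresultant-type quantities are genuine members of $\sA$ whose signs pin down the interval-by-interval root count of $f$, and verify that the endpoints of the components of $\{t:g_j>0\ \forall j\}$ --- which are themselves $x$-dependent roots of the $g_j$ --- are accounted for. Each individual case is just the standard computation over a real-closed field; the content of the theorem, and the delicate part, is that it can be organized uniformly in $x$ using nothing but Boolean combinations of sign conditions on elements of $\sA$, which is precisely what the $\sS(\sA)$-partition furnishes.
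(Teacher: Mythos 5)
The paper does not prove this statement at all: it is quoted as a classical black box, with the proof deferred to the cited references (\L{}ojasiewicz, and Bierstone--Milman, Thm.~2.2). So there is no ``paper's proof'' to compare against; the relevant comparison is with those references, and your outline is essentially their argument: reduce to $m=1$ via $\sA[t_1,\ldots,t_m]=(\sA[t_1,\ldots,t_{m-1}])[t_m]$, reduce to a single basic set, and then partition $U$ by sign conditions on finitely many elements of $\sA$ built polynomially from the coefficients, so that the root configuration of $f(x,\cdot)$ relative to the $g_j(x,\cdot)$ is combinatorially constant on each piece. That is the right skeleton and the reductions are correct.

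The one place your sketch is softer than it should be is exactly the spot you flag: Sturm's theorem counts roots of $f(x,\cdot)$ in an interval $(a,b)$ only after you evaluate the Sturm sequence at $a$ and $b$, and the endpoints of the components of $\{t: g_j(x,t)>0\ \forall j\}$ are roots of the $g_j$ --- algebraic functions of $x$, not elements of $\sA$ --- so ``signs of the coefficients of the remainder sequences'' does not directly give you the interval-by-interval count you assert. The standard way to close this is to avoid intervals altogether: use Sylvester's form of Sturm's theorem (Tarski queries), i.e.\ the Sturm sequence of $f$ and $f'\cdot h$ computes $\sum_{f(\xi)=0}\operatorname{sgn} h(\xi)$ over all real roots $\xi$, and running this for $h$ ranging over products of the $g_j$ (and their powers) lets one solve a linear system recovering the number of roots of $f$ realizing each sign vector of $(g_1,\ldots,g_q)$ --- all from sign conditions on subresultant-type coefficients, which do lie in $\sA$. (The $f\equiv 0$ stratum is handled by adjoining an auxiliary polynomial whose roots meet every component of $\{g_j>0\ \forall j\}$, e.g.\ the derivative of the product of the $g_j$, and reducing to the previous case.) Alternatively, Bierstone--Milman organize this as an induction on a family of polynomials closed under derivatives and Euclidean remainders, which accomplishes the same uniform sign determination. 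With that replacement your argument is the standard complete proof; as written, the claim that the remainder-sequence signs ``determine the number of real roots in each connected component'' is the one assertion that is not literally true and needs this reformulation.
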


Complex-analytic subvarieties are preserved under finite (or just proper)
holomorphic maps.  Real semialgebraic sets are preserved under all
real polynomial maps.  On the other hand real-analytic subvarieties or
semianalytic sets are not preserved by finite or proper real-analytic maps.
But, as long as the map is holomorphic and
finite, semianalytic sets are preserved.  Here is an intuitive
useful argument of why this is expected:
Map forward the complexification of a real-analytic subvariety
by the complexification of the map
$(z,\bar{z}) \mapsto \bigl(f(z),\bar{f}(\bar{z})\bigr)$, which is still
finite, so it maps the complexification to a complex-analytic subvariety.
So the image of a real-analytic subvariety of dimension
$d$ via a finite holomorphic map is contained in a real-analytic subvariety
of dimension $d$.  To get equality we need to go to semianalytic sets: Think
of $z \mapsto z^2$ as the map and the real line as the real-analytic subvariety.
The holomorphicity is required as the complexification of a finite
real-analytic map need not be finite
(simple example: $z \mapsto z\bar{z} + i(z+\bar{z})$).

\begin{lemma}
Let $V,W$ be complex analytic spaces, $S \subset V$ a semianalytic set,
and $f \colon V \to W$ a finite holomorphic map.  Then $f(S)$ is
semianalytic of the same dimension as $S$.
\end{lemma}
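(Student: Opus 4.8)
The plan is to reduce the statement to a known projection theorem for semianalytic sets by using the graph of $f$ and a finite embedding into affine space. The essential point is that a finite holomorphic map is, locally, a branched cover, and that the defining relations for its graph are themselves algebraic over holomorphic functions, so the image can be cut out and then projected.

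First I would reduce to the local and coordinate situation. Since being semianalytic is a local property on $W$ and $f$ is finite (hence proper with finite fibers), I may work near a point $w_0 \in W$ and replace $V$ by $f^{-1}(W')$ for a small neighborhood $W'$ of $w_0$; then $V$ is a finite union of (germs of) analytic spaces, so after further localization I may assume $V$ and $W$ are (locally) analytic subvarieties of polydiscs in $\C^N$ and $\C^M$ respectively. Identifying $\C^N \cong \R^{2N}$ and $\C^M \cong \R^{2M}$, a holomorphic map is real-analytic, and a semianalytic subset $S$ of $V$ is a semianalytic subset of $\R^{2N}$ contained in $V$. Consider the graph $\Gamma_f = \{(x,y) : x \in V,\ y = f(x)\} \subset \R^{2N} \times \R^{2M}$. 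This is semianalytic (intersection of the graph, a real-analytic subvariety, with $S \times \R^{2M}$, which pulls back $S$). Then $f(S) = \pi(\Gamma_f \cap (S \times \R^{2M}))$ where $\pi$ is the projection onto the $y$-coordinates. By the \L{}ojasiewicz--Tarski--Seidenberg theorem quoted above (with $\sA$ the ring of real-analytic functions in $y$, and the $x$-variables playing the role of bounded auxiliary variables), once I know the relevant set is described by real-analytic functions in $y$ that are \emph{polynomial} in $x$, the projection $f(S)$ is semianalytic.

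The main obstacle — and the place where holomorphicity is genuinely used — is precisely converting ``$x$ ranges over a set cut out by real-analytic functions of $x$'' into ``$x$ satisfies relations polynomial in $x$ with real-analytic coefficients in $y$.'' Here I would invoke finiteness of $f$: near $w_0$ the coordinate functions $x_1,\dots,x_N$ on $V$ are integral over the ring of holomorphic functions pulled back from $W$, so each $x_j$ satisfies a monic polynomial $x_j^{d_j} + a_{j,d_j-1}(y) x_j^{d_j-1} + \cdots + a_{j,0}(y) = 0$ with $a_{j,\ell}$ holomorphic in $y$; this is the standard consequence of the Weierstrass preparation / finite map structure (see Whitney~\cite{Whitney:book}). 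Separating real and imaginary parts, these become real-analytic relations in $y$ that are polynomial in the real coordinates of $x$. Together with the (finitely many) real-analytic inequalities and equalities that describe $S \subset V$ in the $x$-variables, the set $\Gamma_f \cap (S \times \R^{2M})$ lies in $\sS\bigl(\sA[x_1,\dots,x_{2N}]\bigr)$ where $\sA = C^\omega$ in the $y$-variables; applying the projection theorem gives that $f(S)$ is semianalytic.

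Finally, for the dimension claim: since $f$ is finite, it is a local homeomorphism onto its image off a lower-dimensional subvariety, so $\dim f(S) \leq \dim S$; and since $f$ has finite fibers, it cannot lower dimension, $\dim f(S) = \dim S$ — more carefully, stratify $S$ into finitely many real-analytic manifolds (possible as $S$ is semianalytic), observe $f$ restricted to each stratum has discrete fibers hence is an immersion on a dense open subset, giving $\dim f(S) \ge \dim S$, while the reverse inequality is immediate from continuity and the stratification of the image. This completes the argument.
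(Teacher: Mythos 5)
Your route is the same as the paper's: pass to the graph $\Gamma_f$, use finiteness and holomorphicity to get a description that is polynomial in the source variables, and then project via the \L{}ojasiewicz--Tarski--Seidenberg theorem. However, there is a genuine gap at exactly the step you single out as the main obstacle. The monic relations $x_j^{d_j} + a_{j,d_j-1}(y)x_j^{d_j-1}+\cdots+a_{j,0}(y)=0$ coming from integrality only show that $\Gamma_f$ is contained in (indeed defined by) sets cut out by polynomials in $x$ with coefficients analytic in $y$. They do nothing, by themselves, for the defining functions of $S$: those are arbitrary real-analytic functions $\varphi(x)$ of the $x$-variables, which are \emph{not} elements of $\sA[x_1,\ldots,x_{2N}]$ when $\sA$ is the ring of real-analytic functions of $y$. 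So the set $\Gamma_f \cap (S\times\R^{2M})$, as you have described it (``together with the real-analytic inequalities and equalities that describe $S$''), is not visibly in $\sS\bigl(\sA[x_1,\ldots,x_{2N}]\bigr)$, and the projection theorem cannot yet be applied. The sentence asserting membership in $\sS\bigl(\sA[x]\bigr)$ is precisely the conclusion that still needs an argument.

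The missing step is a division. Because the finiteness of $f$ gives Weierstrass (monic) polynomials in each real coordinate $x_i$, with coefficients real-analytic in $y$, lying in the ideal of $\Gamma_f$, you can divide each defining function $\varphi$ of $S$ by these polynomials and replace $\varphi$ by its remainder $\psi$, which \emph{is} a polynomial in the $x$-variables with coefficients real-analytic in $y$ and which agrees with $\varphi$ on $\Gamma_f$. Since only the values on $\Gamma_f$ matter for describing $\Gamma_f\cap(S\times\R^{2M})$, this substitution puts the set into $\sS\bigl(\sA[x_1,\ldots,x_{2N}]\bigr)$, and only then does \L{}ojasiewicz--Tarski--Seidenberg yield that the projection $f(S)$ is semianalytic. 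This division step is how the paper completes the argument. Your treatment of the dimension statement is fine.
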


\begin{proof}
Without loss of generality, assume that $f(V) = W$.
Furthermore, since the map is finite, and finite unions of semianalytic sets
are semianalytic, assume that $V,W$ are actual complex-analytic subvarieties by
working locally in some chart, and in general we can just assume we are
working in an arbitrarily small neighborhood of the origin $0 \in V$,
and that $f(0) = 0$.
Suppose $V$ is a subvariety of some neighborhood $U \subset \C^n$,
and $W$ is a subvariety of some open set $U' \subset \C^m$.
By adding components to $f$ equal to the defining functions of $V$ (and thus
possibly increasing $m$) we can assume without loss of generality that $f
\colon U \to \C^m$ is a finite map on $U$ and not just $V$.

Consider the graph $\Gamma_f$ of $f$ in $U \times \C^m$.  As
$f$ is finite, the projection of $\Gamma_f$ to $\C^m$ is finite.
Hence, the variety $\Gamma_f$ can be defined by functions that are
polynomials in the first $n$ variables (in fact polynomials in the first $n$
variables and a few of the last $m$ variables depending on the codimension of
$W = f(V)$ in $\C^m$).  Let $z = x+iy$ denote the first $n$ variables, and $\xi$
denote the last $m$ variables.  The variety $\Gamma_f$ as a real subvariety
is defined by functions that are polynomials in $x$ and $y$.

Also assume that $U$ is small enough so that $S$ is defined by
real-analytic functions in $U$, that is, $S \in \sS\bigl(C^\omega(U)\bigr)$.
The set $S$ corresponds to a semianalytic set $\widetilde{S} \subset
\Gamma_f$.  The set $\widetilde{S}$ is defined by functions defined in
some $U \times U'$, suppose $\varphi$ is one of these functions.
The subvariety $\Gamma_f$ is defined by polynomials in $x$ and $y$,
so we find Weierstrass polynomials in every one of
$x$ and $y$ with coefficients real-analytic functions in $\xi$
that are in the real-analytic ideal for $\Gamma_f$ at $(0,0)$.
Since adding anything in the ideal does not
change $\varphi$ where it matters (on $\Gamma_f$), we can divide by
these polynomials and find a remainder $\psi$, which is a polynomial in
$x$ and $y$ such that $\psi = \varphi$ on $\Gamma_f$.  In other words,
$\widetilde{S} \in \sS\bigl(C^\omega(U')[x,y]\bigr)$.
By the \L{}ojasiewicz--Tarski--Seidenberg theorem, the projection of
$\widetilde{S}$ to $U'$ is semianalytic.

The fact that the dimension is preserved follows from $f$ being finite.
\end{proof}

\begin{remark}
The conclusion of the lemma is not true if $f$ is not holomorphic and finite.  If $f$ is
proper but not holomorphic, the best we can conclude is that $f(S)$ is 
subanalytic as long as we also assume that $S$ is precompact.  Our task
would be easier if we only desired to prove that $X_{[k]}$ is
subanalytic.
\end{remark}

The proof that $X_{[k]}$ is semianalytic for non-coherent subvarieties is
similar to Theorem~\ref{thm:cohXk}, but we work on the normalization of
the complex variety $\sX^U$.

\begin{thm} \label{thm:noncohXk}
Let $U \subset \C^n$ be open and $X \subset U$ be a real-analytic
subvariety.  Then for every
$k=0,1,\ldots,n$, $X_{[k]}$ is a closed semianalytic subset of $X$.
\end{thm}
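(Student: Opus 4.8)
The plan is to mimic the structure of the coherent case (Theorem~\ref{thm:cohXk} and Proposition~\ref{prop:Xkclosed}) but to pass to the normalization of the complex variety $\sX^U$, where the fiber dimension of a holomorphic map behaves well, and then push the resulting semianalytic set forward by a finite holomorphic map using the preceding lemma. The point is that for noncoherent $X$ the germ $\Sigma_q X$ can be a proper subgerm of $(\Sigma_q^U X, q)$, so the naive subvariety $V_k \cap \iota(X)$ from Theorem~\ref{thm:cohXk} computes $X_{U[k]}$, not $X_{[k]}$. The key observation is that $\Sigma_q X$ corresponds to exactly those branches of $\sX^U$ through $(q,\bar q)$ that actually come from the complexification germ $\sX_q$, and passing to the normalization separates these branches, turning ``which branch'' into a locally constant (hence semianalytic) datum.

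First I would fix $p \in X$ and take $U$ good for $X$ at $p$; since the statement is local on $X$ it suffices to work there. Let $\nu \colon \widetilde{\sX^U} \to \sX^U$ be the normalization; it is a finite holomorphic map, and the irreducible components of $\widetilde{\sX^U}$ are its connected components. Let $\pi \colon \sX^U \to \C^n$ be the projection $\pi(z,\xi) = \xi$ as in the proof of Theorem~\ref{thm:cohXk}, and set $\widetilde\pi = \pi \circ \nu$. On $\widetilde{\sX^U}$, for each integer $k$ the set
\begin{equation}
\widetilde V_k = \bigl\{ \tilde x \in \widetilde{\sX^U} : \dim_{\tilde x} \widetilde\pi^{-1}\bigl(\widetilde\pi(\tilde x)\bigr) \geq k \bigr\}
\end{equation}
is a complex-analytic subvariety (Whitney, as cited). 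Next I would identify, for $q \in X$, the germ $\Sigma_q X$ with the union of those branches of $\Sigma_q^U X$ lying under points of $\nu^{-1}(q,\bar q)$ that belong to components of $\widetilde{\sX^U}$ corresponding to components of the germ $\sX_q$; here goodness of $U$ at $p$ and the local analysis of $\sX^U$ versus $\sX_q$ near $q$ are what make this precise and what allow one to detect the relevant branches. The crucial point is that the condition ``$\dim \Sigma_q X \geq k$'' becomes: some point $\tilde x \in \nu^{-1}(q,\bar q)$ lies in $\widetilde V_k$ and lies in a component of $\widetilde{\sX^U}$ that meets $\iota(X)$ in a set of the full expected dimension near $\tilde x$ (i.e.\ is a ``real'' branch rather than a spurious one). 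Each of these conditions cuts out a complex-analytic subvariety $W_k \subset \widetilde{\sX^U}$; intersecting with the preimage of $\iota(X)$, which is the real semianalytic (indeed real-analytic) condition $\xi = \bar z$ pulled back by $\nu$, gives a semianalytic subset $\widetilde S_k \subset \widetilde{\sX^U}$. Finally, $X_{[k]}$ is the image of $\widetilde S_k$ under the finite holomorphic map $\nu$ followed by the coordinate projection $(z,\xi)\mapsto z$ (also finite on $\iota(X)$), so by the lemma on finite holomorphic images of semianalytic sets, $X_{[k]}$ is semianalytic of the same dimension; closedness is Proposition~\ref{prop:Xkclosed}.

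The main obstacle I anticipate is the branch-bookkeeping step: making rigorous the claim that, after normalization, the germ $\Sigma_q X$ is recovered precisely as the union of the ``correct'' branches, uniformly in $q$ near $p$, and that the set of base points where a correct branch of fiber-dimension $\geq k$ exists is genuinely a (semi)analytic condition rather than something more pathological. One must rule out that the distinction ``real branch versus spurious branch'' jumps in a non-semianalytic way as $q$ varies — this is where the upper semicontinuity of $\dim \Sigma_q X$ from Proposition~\ref{prop:Xkclosed} and the fact that goodness of $U$ can be arranged at a dense open set of points (or propagated) must be combined carefully. Everything downstream — the Whitney theorem on fiber dimension, the \L{}ojasiewicz--Tarski--Seidenberg projection theorem, and the finite-holomorphic-image lemma — is already in hand, so the argument reduces to correctly translating the germ-level statement about $\Sigma_q X$ into a subvariety condition on the normalization.
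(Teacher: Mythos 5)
Your skeleton is the same as the paper's: localize to a good neighborhood, pass to the normalization $h \colon \sY \to \sX^U$, apply Whitney's theorem on fiber dimension to $\pi \circ h$ where $\pi(z,\xi)=\xi$, intersect with a semianalytic set selecting the ``real'' branches, and push forward by the finite holomorphic map using the preceding lemma together with \L{}ojasiewicz--Tarski--Seidenberg. But the step you flag as the ``main obstacle'' --- identifying, uniformly in $q$, which local branches of $\sX^U$ at $(q,\bar q)$ actually belong to the complexification germ $\sX_q$ --- is the entire content of the proof, and you have not supplied the idea that resolves it. The paper's resolution is this: let $X_1 = h^{-1}\bigl(\iota(\overline{X^*})\bigr)$ and let $X_2 = \overline{X_1^*}$ be the closure of the points of $X_1$ that are regular of top dimension $d$. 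Because $\sY$ is normal, every germ $(\sY,\eta)$ is irreducible; near $\eta \in X_2$ the set $X_2$ is totally real of dimension $d$, so $(\sY,\eta)$ is the \emph{smallest} complex germ containing $(X_2,\eta)$, and hence the images $h\bigl((\sY,\eta')\bigr)$ for $\eta' \in h^{-1}(h(\eta)) \cap X_2$ are precisely the irreducible components of $\sX_{h(\eta)}$. That is, the ``correct branches at $q$'' are exactly the sheets of the normalization that meet $X_2$ over $(q,\bar q)$, which gives $h(X_2 \cap W_k) = \iota(X_{[k]} \cap \overline{X^*})$. Note also that this selector $X_2$ is only \emph{semianalytic}, not complex-analytic as you assert; conflating the two suggests the branch selection was not pinned down.

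A second omission: your criterion (a branch is ``real'' iff it meets $\iota(X)$ in a set of full expected dimension) silently discards all points $q$ where $\dim_q X < d$, i.e.\ the lower-dimensional part of a non-pure-dimensional $X$, where no $d$-dimensional real branch exists but $\Sigma_q X$ is certainly nonempty. The paper handles this by writing, locally, $X = \overline{X^*} \cup X'$ with $X'$ a real-analytic subvariety of strictly smaller dimension, observing $X_{[k]} \setminus \overline{X^*} = X'_{[k]} \setminus \overline{X^*}$, and inducting on dimension; the normalization argument then only needs to treat $X_{[k]} \cap \overline{X^*}$. Without this reduction (or some substitute), your construction computes the wrong set off of $\overline{X^*}$. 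So: right strategy, correct supporting lemmas already in hand, but the two ideas that make it work --- the normality/totally-real argument identifying the branches via $X_2$, and the induction over the dimension stratification --- are missing.
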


\begin{proof}
Again, it is a local result, so without loss of generality,
assume that $U$ is good for $X$ at some
$p \in X$ and suppose that $X$ is irreducible at $p$ and that $X$ is
of dimension $d$.
Consider $h \colon \sY \to \sX^U$, the normalization of
$\sX^U$.
There are two reasons why $\sX^U$ is not the complexification at some point $q$.
For points $z$ arbitrarily near $q$, either the set $X$ is of lower dimension at $z$,
or there are multiple irreducible components of the germ $\bigl(\sX^U,(z,\bar{z})\bigr)$.

Let $\overline{X^*}$ denote the relative closure in $U$ of the set of points
of dimension $d$.  The set $X \setminus \overline{X^*}$ is semianalytic,
and so locally near any $q \in X$ it is possible to write $X = \overline{X^*} \cup X'$ for
$X'$ a real-analytic subvariety of lower dimension (possibly empty) defined in a neighborhood
of $q$.  Suppose for induction
that $X'_{[k]}$ is semianalytic.
Then 
$X'_{[k]} \setminus \overline{X^*} =
X_{[k]} \setminus \overline{X^*}$ is also semianalytic (in a neighborhood of $q$).
In other words,
it remains to prove that $X_{[k]} \cap \overline{X^*}$ is semianalytic.

Let $X_1 = h^{-1}\bigl(\iota(\overline{X^*})\bigr)$, and note that
this is a closed semianalytic subset of $\sY$ of dimension $d$, although it
can have points of various dimensions.  Therefore, take $X_2 =
\overline{X_1^*}$ to be the closure (in $\sY$) of the nonsingular points of
$X_1$ of dimension $d$.  It is clear that $h(X_2) = \iota(\overline{X^*})$.

Let $(z,\xi)$ be the complexified variables of $\C^n \times \C^n$, where
$\sX^U$ lives.
Consider the projection $\pi(z,\xi)=\xi$ defined on $\sX^U$.
The Segre variety $\Sigma_z^U X$ is the fiber
$\pi^{-1}\bigl(\pi(z,\bar{z})\bigr)$, but the germ at $(z,\bar{z})$ may
contain other components, so we pull back to $\sY$.

Let $\eta$ be the variable on $\sY$ and we pull back via $h$ as
$(\pi \circ h)^{-1}\bigl(\pi \circ h(\eta)\bigr)$.
The space $\sY$ is normal and so the germ $(\sY,\eta)$ is irreducible for
all $\eta$.  Near some $\eta \in X_2$, the set $X_2$ is a totally-real
subset of $\sY$ of dimension $d$.
Hence $(\sY,\eta)$, which is irreducible and of dimension $d$, contains $(X_2,\eta)$
and is then the smallest complex subvariety containing
$(X_2,\eta)$.
The germ of the complexification of $X$ at $h(\eta)$ has as its components the
images of $(\sY,\eta')$ via $h$ for all
$\eta' \in h^{-1}(h(\eta)) \cap X_2$.  These images must be contained in the
complexification and as $h(X_2) = \iota(\overline{X^*})$, their 
union is the entire complexification of $X$ at $h(\eta)$.
We thus need to consider the sets
\begin{equation}
W_k = \Bigl\{ \eta \in \sY : \dim_{\eta}
(\pi \circ h)^{-1}\bigl(\pi \circ h(\eta)\bigr)
\geq k \Bigr\} ,
\end{equation}
which are again complex analytic.  We are interested in the sets
$X_2 \cap W_k$, which are semianalytic, and we have just proved above that
$h(X_2 \cap W_k) = \iota(X_{[k]})$.  As $h$ is finite and 
$X_2 \cap W_k$ is semianalytic, we are finished.
\end{proof}


\section{Examples of Segre variety degeneracies} \label{sec:examples}

\begin{example} \label{example:isol}
The set of Segre-degenerate points of a coherent hypersurface
in $\C^n$ can be a complex subvariety of dimension strictly less than $n-2$.
Let $X \subset \C^3$ in coordinates $(z,w,\xi) \in \C^3$ be given by
\begin{equation}
z\bar{z} + w\bar{w} - \xi \bar{\xi} = 0.
\end{equation}
The set of regular points is everything except the origin, so
only the origin can be Segre-degenerate, and for this subvariety, it is,
as the above equation generates the ideal $I_0(X)$ by
Lemma~\ref{lemma:cone}.  So $X_{[3]} = \{ 0 \}$, which is of dimension $n-3
= 0$.
\end{example}

\begin{example} \label{example:noncomplex}
For a higher codimensional subvariety, the set $X_{[k]}$ for $k < n$ is
generally not complex.  Clearly if $k \leq d-n$, then $X_{[k]} = X$
and $X$ is not necessarily complex.  But even for higher $k$ less than $n$,
the set need not be complex.
Let $X \subset \C^3$ in coordinates $(z,w,\xi) \in \C^3$ be given by
\begin{equation}
z\bar{z} - w\bar{w} = 0, \qquad \Im \xi = 0 .
\end{equation}
The subvariety $X$ is $4$-dimensional and coherent.
It is easy to see that $X_{[1]} = X$, $X_{[2]} = \{ z=0, w=0, \Im \xi = 0 \}$,
and $X_{[3]} = \emptyset$.  The set $X_{[2]}$ is not complex.
\end{example}

\begin{example} \label{example:mfld}
A submanifold may be Segre-degenerate, if it is CR singular.  Let $(z,w)$ be
the coordinates in $\C^2$ and consider the manifold $X$ given by
\begin{equation}
w = z\bar{z} .
\end{equation}
As this is a complex equation, to find the generators of the ideal, we must
take the real and imaginary parts, or equivalently, also consider the conjugate of
the equation, $\bar{w} = z\bar{z}$.  For points where $z \not= 0$, the Segre
variety is just the trivial point, so zero dimensional.  But at the point
$(0,0)$ the Segre variety is the complex line $\{ w=0 \}$.  In other words,
$X_{[0]} = X$, $X_{[1]} = \{ (0,0) \}$, and $X_{[2]} =
\emptyset$.

Similarly, the Segre variety of a submanifold can be singular if the
manifold is CR singular.  Let $(z,w,\xi)$ be coordinates in $\C^3$ and consider
$X$ given by
\begin{equation}
w = z^2+\bar{z}^2+\xi^2+\bar{\xi}^2 .
\end{equation}
The Segre subvariety at the origin $\Sigma_0 X$ is the pair of complex lines given by
$\{ w = 0, (z+i\xi)(z-i\xi) = 0 \}$.
\end{example}

\begin{example} \label{example:regularSbad}
Consider Example~\ref{example:regularbad}, that is
$(x^2+y^2)^6-s^8x^3(s-x) = 0$ and extend it to $\C^2$ using
$z = x+iy$ and $w = s+it$.
In other words, we use $X \times \R$ if $X$ is the variety of the previous example.
That is, let $X$ in $(z,w) \in \C^2$ be given by
\begin{equation}
f(z,w,\bar{z},\bar{w}) = (z\bar{z})^6-(\Re w)^8(\Re z)^3(\Re w-\Re z) = 0 .
\end{equation}
Similarly to Example~\ref{example:regularbad},
this $f$ generates the ideal at $I_0(X)$,
its derivatives vanish when $z=0$, but $X$ is regular outside of $\{ z = 0 ,
\Re w = 0 \}$.  So there are regular (hypersurface, thus generic) points of $X$
where the complexified $f$
defines a singular subvariety.  That is, regular points of $X$ where the
corresponding $\sX^U$ is singular for any neighborhood $U$ of $0$.
For such a point $q$, for any $U$, $\Sigma_q X$ is regular, but $\Sigma_q^U
X$ is singular at $q$.  In particular,
\begin{equation}
\Sigma_q X \subsetneq (\Sigma_q^U,q) .
\end{equation}
So $\Sigma_q X$ is just one component of the germ
$(\Sigma_q^U,q)$.
\end{example}

\begin{example} \label{example:nonvar}
The set of Segre-degenerate points of a hypersurface need not be
a subvariety for noncoherent $X$.
Let $X \subset \C^3$ in coordinates $(z,w,\xi) \in \C^3$ be given by
\begin{equation}
z\bar{z} - (\xi+\bar{\xi}) w \bar{w} = 0 .
\end{equation}
The set is reminiscent of the Whitney umbrella.  The set $U = \C^3$ is a
good neighborhood for $X$ at $0$.
The set of Segre-degenerate points with respect to $U$ (actually any
neighborhood $U$ of the origin), is $X_{U[3]} = \{ w = z = 0 \}$,
that is,
a one-dimensional complex line.
However, where $\Re \xi < 0$, the variety $X$ is
locally just the line $\{w=z=0 \}$.  Therefore, the variety is a real
manifold of dimension 2 (complex manifold of dimension 1).  At such points
$\Sigma_p X$ is one-dimensional and such points are not in
$X_{[3]}$ (not Segre-degenerate).  Hence,
\begin{equation}
X_{[3]} = \{ (z,w,\xi) \in X : w=z=0, \Re \xi \geq 0 \}
\end{equation}
and this set is not a subvariety, it is a semianalytic set.
\end{example}

\begin{example} \label{example:Sbadrealline}
Let us construct the promised noncoherent
hypersurface in $\C^3$ where the set $X_{[n]}$
of Segre-degenerate points is not complex, in fact, it is a real line.

Let $X \subset \C^3$ in coordinates $(z,w,\xi) \in \C^3$ be given by
\begin{equation}
\psi = 
w^2\bar{w}^2 (\Re \xi)
+
4(\Re z)(\Re \xi)^2 w \bar{w}
+
4(\Re z)^3 z \bar{z}
=
0 .
\end{equation}
The function is irreducible as a polynomial and homogeneous and thus
$(X,0)$ is irreducible as a germ of a real-analytic subvariety.

The set where $d\psi = 0$ is given by $\Re z = 0$, $w=0$, and this set lies
in $X$.  Therefore, $\{ d \psi = 0 \} \subset X$ is 3-real dimensional.  However,
the singular set $X_{\text{sing}}$ is 2-dimensional given by
$\Re z = 0$, $w=0$, and $\Re \xi = 0$. 
Let us prove this fact.
For simplicity let $z=x+iy$ and $\xi = s+it$ and assume $s \not=0$.
Solve for $w\bar{w}$ as
\begin{equation}
w\bar{w} =
x\left(-2s\pm
\frac{2}{s}\sqrt{s^4-s x (x^2+y^2)} \right).
\end{equation}
When the sign is negative and $s\not=0$, we can solve for $x$ by the implicit function
theorem and the subvariety has a regular point there.
If the sign is positive and $s\not=0$, then we claim that there is no solution except
$x=0, s=0, w=0$.  We must check a few possibilities.
If $x > 0$ and $s > 0$, then
$\frac{2}{s}\sqrt{s^4-s x (x^2+y^2)} < 2s$, and as $w \bar{w}$ must be
positive there are no such real solutions.
Similarly for every other sign combination.  That means that the only
solution when $s \not= 0$ is when the sign is positive.  So $X$ is regular when
$\Re \xi = s \not= 0$.
Similarly, it is not difficult to show that $X$ is singular 
at points where $\Re z = 0$, $w=0$, $\Re \xi = 0$:  For example, at such
points, were they regular, the $\Re z =0$ hyperplane
and the $\Re z = - \Re \xi$ hyperplane would both have to be
tangent as their intersections with $X$ are singular (both reducible).
That is impossible for a regular point.

Since $\psi$ generates the ideal at the origin, it is
easy to see that
$X_{U[n]} = \{ z = 0, w=0 \}$ near the origin
for any good neighborhood $U$ of the origin.
As $X_{[n]} \subset X_{U[n]}$ and $X_{[n]} \subset X_{\text{sing}}$,
we can see that $X_{[n]} \subset
\{ z=0, w=0, \Re \xi = 0 \}$.  Since the defining
function does not depend on $\Im \xi$, all the
points of the set $\{ z=0, w=0, \Re \xi = 0 \}$ are in $X_{[n]}$ or none of
them are.  The origin is definitely Segre-degenerate as $\psi$
is the generator of the ideal there, and thus
$X_{[n]} = \{ z=0, w=0, \Re \xi = 0 \}$.
So the set $X_{[n]}$ where $X$ is
Segre-degenerate is of real dimension 1.

In other words:
\begin{enumerate}[(i)]
\item $\dim X_{\text{sing}} = 2$.
\item $\{ d f = 0 \} \cap X$ is 3 real-dimensional for every real-analytic
germ $f$ vanishing on $X$ (and not identically zero).
\item The set of Segre-degenerate points $X_{[n]}$ is a real one-dimensional line.
\item The set of Segre-degenerate points relative to $U$, $X_{U[n]}$, is a complex one-dimensional line at the origin
for every good neighborhood $U$ of the origin, and $X_{U[n]} \cap
X_{\text{reg}}
\not=\emptyset$.
\end{enumerate}
\end{example}

%
%


\def\MR#1{\relax\ifhmode\unskip\spacefactor3000 \space\fi%
  \href{http://www.ams.org/mathscinet-getitem?mr=#1}{MR#1}}

\begin{bibdiv}
\begin{biblist}

\bib{ARS}{article}{
   author={Adamus, Janusz},
   author={Randriambololona, Serge},
   author={Shafikov, Rasul},
   title={Tameness of complex dimension in a real analytic set},
   journal={Canad. J. Math.},
   volume={65},
   date={2013},
   number={4},
   pages={721--739},
   issn={0008-414X},
   review={\MR{3071076}},
   doi={10.4153/CJM-2012-019-4},
}

\bib{BER:book}{book}{
   author={Baouendi, M. Salah},
   author={Ebenfelt, Peter},
   author={Rothschild, Linda Preiss},
   title={Real submanifolds in complex space and their mappings},
   series={Princeton Mathematical Series},
   volume={47},
   publisher={Princeton University Press, Princeton, NJ},
   date={1999},
   pages={xii+404},
   isbn={0-691-00498-6},
   review={\MR{1668103}},
}

\bib{BM:semisub}{article}{
   author={Bierstone, Edward},
   author={Milman, Pierre D.},
   title={Semianalytic and subanalytic sets},
   journal={Inst. Hautes \'{E}tudes Sci. Publ. Math.},
   number={67},
   date={1988},
   pages={5--42},
   issn={0073-8301},
   review={\MR{972342}},
}

\bib{BCR}{book}{
   author={Bochnak, Jacek},
   author={Coste, Michel},
   author={Roy, Marie-Fran\c{c}oise},
   title={Real algebraic geometry},
   series={Ergebnisse der Mathematik und ihrer Grenzgebiete (3) [Results in
   Mathematics and Related Areas (3)]},
   volume={36},
   note={Translated from the 1987 French original;
   Revised by the authors},
   publisher={Springer-Verlag, Berlin},
   date={1998},
   pages={x+430},
   isbn={3-540-64663-9},
   review={\MR{1659509}},
   doi={10.1007/978-3-662-03718-8},
}

\bib{burnsgong:flat}{article}{
   author={Burns, Daniel},
   author={Gong, Xianghong},
   title={Singular {L}evi-flat real analytic hypersurfaces},
   date={1999},
   ISSN={0002-9327},
   journal={Amer.\ J.\ Math.},
   volume={121},
   number={1},
   pages={23--53},
   review={\MR{1704996}},
}

\bib{Cartan57}{article}{
   author={Cartan, Henri},
   title={Vari\'{e}t\'{e}s analytiques r\'{e}elles et vari\'{e}t\'{e}s analytiques complexes},
   language={French},
   journal={Bull.\ Soc.\ Math.\ France},
   volume={85},
   date={1957},
   pages={77--99},
   issn={0037-9484},
   review={\MR{94830}},
}

\bib{DF:realbnd}{article}{
   author={Diederich, Klas},
   author={Fornaess, John E.},
   title={Pseudoconvex domains with real-analytic boundary},
   journal={Ann. Math. (2)},
   volume={107},
   date={1978},
   number={2},
   pages={371--384},
   review={\MR{0477153}},
}

\bib{DM}{article}{
   author={Diederich, Klas},
   author={Mazzilli, Emmanuel},
   title={Real and complex analytic sets. The relevance of Segre varieties},
   journal={Ann. Sc. Norm. Super. Pisa Cl. Sci. (5)},
   volume={7},
   date={2008},
   number={3},
   pages={447--454},
   issn={0391-173X},
   review={\MR{2466436}},
}

\bib{FP14}{article}{
   author={Fern\'{a}ndez-P\'{e}rez, Arturo},
   title={Levi-flat hypersurfaces tangent to projective foliations},
   journal={J. Geom. Anal.},
   volume={24},
   date={2014},
   number={4},
   pages={1959--1970},
   issn={1050-6926},
   review={\MR{3261727}},
   doi={10.1007/s12220-013-9404-y},
}

\bib{TopicsReal}{book}{
   author={Guaraldo, Francesco},
   author={Macr\`\i , Patrizia},
   author={Tancredi, Alessandro},
   title={Topics on real analytic spaces},
   series={Advanced Lectures in Mathematics},
   publisher={Friedr. Vieweg \& Sohn, Braunschweig},
   date={1986},
   pages={x+163},
   isbn={3-528-08963-6},
   review={\MR{1013362}},
   doi={10.1007/978-3-322-84243-5},
}

\bib{Lebl:lfsing}{article}{
   author={Lebl, Ji\v{r}\'{\i}},
   title={Singular set of a Levi-flat hypersurface is Levi-flat},
   journal={Math. Ann.},
   volume={355},
   date={2013},
   number={3},
   pages={1177--1199},
   issn={0025-5831},
   review={\MR{3020158}},
   doi={10.1007/s00208-012-0821-1},
}

\bib{Loja:semi}{book}{
   author={\L{}ojasiewicz, Stanislas},
   title={Ensembles semi-analytiques},
   publisher={Inst.\ Hautes fitudes Sci., Bures-sur-Yvette},
   year={1965},
   note={Available at \url{http://perso.univ-rennes1.fr/michel.coste/Lojasiewicz.pdf}}
}

\bib{PSS}{article}{
   author={Pinchuk, S. I.},
   author={Shafikov, R. G.},
   author={Sukhov, A. B.},
   title={Dicritical singularities and laminar currents on Levi-flat
   hypersurfaces},
   language={Russian, with Russian summary},
   journal={Izv.\ Ross.\ Akad.\ Nauk Ser.\ Mat.},
   volume={81},
   date={2017},
   number={5},
   pages={150--164},
   issn={1607-0046},
   translation={
      journal={Izv.\ Math.},
      volume={81},
      date={2017},
      number={5},
      pages={1030--1043},
      issn={1064-5632},
   },
   review={\MR{3706863}},
   doi={10.4213/im8582},
}

\bib{Webster:map}{article}{
   author={Webster, S. M.},
   title={On the mapping problem for algebraic real hypersurfaces},
   journal={Invent.\ Math.},
   volume={43},
   date={1977},
   number={1},
   pages={53--68},
   issn={0020-9910},
   review={\MR{463482}},
   doi={10.1007/BF01390203},
}

\bib{WhitneyBruhat}{article}{
   author={Whitney, H.},
   author={Bruhat, F.},
   title={Quelques propri\'{e}t\'{e}s fondamentales des ensembles analytiques-r\'{e}els},
   language={French},
   journal={Comment. Math. Helv.},
   volume={33},
   date={1959},
   pages={132--160},
   issn={0010-2571},
   review={\MR{102094}},
   doi={10.1007/BF02565913},
}

\bib{Whitney:book}{book}{
   author={Whitney, Hassler},
   title={Complex analytic varieties},
   publisher={Addison-Wesley Publishing Co., Reading, Mass.-London-Don
   Mills, Ont.},
   date={1972},
   pages={xii+399},
   review={\MR{0387634}},
}

\end{biblist}
\end{bibdiv}


\end{document}